\newcommand\be{\begin{equation}}
\newcommand\ee{\end{equation}}
\newcommand\bea{\begin{eqnarray}}
\newcommand\eea{\end{eqnarray}}
\newcommand\bi{\begin{itemize}}
\newcommand\ei{\end{itemize}}
\newcommand\ben{\begin{enumerate}}
\newcommand\een{\end{enumerate}}
\newtheorem{thm}{Theorem}
\newtheorem{cor}[thm]{Corollary}
\newtheorem{prop}[thm]{Proposition}
\newtheorem{exa}[thm]{Example}
\theoremstyle{definition}
\newtheorem*{remark}{Remark}
\newcommand{\Z}{\ensuremath{\mathbb{Z}}}
\newcommand{\Q}{\mathbb{Q}}
\newcommand{\p}{\mathfrak{p}}
  \title{Newly reducible iterates in families of quadratic polynomials}
  \author[Chamberlin, Colbert, Frechette, Hefferman, Jones, and Orchard]{KATHARINE CHAMBERLIN, EMMA COLBERT, Sharon Frechette, PATRICK HEFFERMAN, Rafe Jones, and SARAH ORCHARD}
  \thanks{This research was partially supported by a supplement to NSF grant DMS-0852826. All the authors are grateful for this support.}
\begin{document}
  
\begin{abstract}
We examine the question of when a quadratic polynomial $f(x)$ defined over a number field $K$ can have a newly reducible $n$th iterate, that is, $f^n(x)$ irreducible over $K$ but $f^{n+1}(x)$ reducible over $K$, where $f^n$ denotes the $n$th iterate of $f$. For each choice of critical point $\gamma$ of $f(x)$, we consider the family
$$
g_{\gamma,m}(x)=(x-\gamma)^2+m+\gamma, \qquad m \in K.
$$
For fixed $n \geq 3$ and nearly all values of $\gamma$, we show that there are only finitely many $m$ such that $g_{\gamma,m}$ has a newly irreducible $n$th iterate. For $n = 2$ we show a similar result for a much more restricted set of $\gamma$. These results complement those obtained by Danielson and Fein \cite{fein} in the higher-degree case. Our method involves translating the problem to one of finding rational points on certain hyperelliptic curves, determining the genus of these curves, and applying Faltings' theorem.
\end{abstract}

  \maketitle

\section{Introduction}

Let $K$ be a number field and $f(x) \in K[x]$. By the $n$th iterate $f^n(x)$ of $f(x)$, we mean the $n$-fold composition of $f$ with itself. Determining the factorization of $f^n(x)$ into irreducible polynomials has proven to be an important problem. 
From a dynamical perspective, it is a question about the inverse orbit $O^-(z) := \bigcup_{n \geq 1} f^{-n}(0)$ of zero, which has significance in various ways. For instance, it accumulates at every point of the Julia set of $f$ \cite[p. 71]{beardon}. The field of arithmetic dynamics seeks to understand sets such as $O^-(z)$ 
from an algebraic perspective, and finding the factorizations of $f^n(x)$ fits into this scheme: a nontrivial factorization arises from an ``unexpected" algebraic relation among elements of $O^-(z)$. In addition, understanding the factorization of $f^n(x)$ has proven to be a key obstacle in determining the Galois groups of $f^n(x)$ (see \cite{hjm, quaddiv} or \cite{galrat} for the case of some rational functions). These Galois groups provide a sort of dynamical analogue to the well-studied $\ell$-adic Galois representations \cite{arboreal}. 

In general, the factorization of the iterates of $f$ can exhibit a wide variety of behaviors. For instance, in \cite[Lemma 1.1]{fein-schacher} it is shown that for each $n \geq 1$ there exist (many) number fields $K$ such that for some $f(x) \in K[x]$, $f^{n+1}(x)$ is \textit{newly reducible}, that is, $f^n(x)$ is irreducible over $K$ but $f^{n+1}(x)$ is reducible over $K$. More specifically, it follows from \cite[p. 243]{stoll} and \cite[Lemma 1.1]{fein-schacher} that if $f(x) = x^2 + m$ for $m \in \Z_{> 0}$, $m \equiv 1,2 \bmod{4}$, then for any fixed $n \geq 1$ there exists a number field $K$ such that $f^{n+1}(x)$ is newly 
reducible over $K$. But what happens when we fix the number field $K$ to start with, and ask about the factorization of $f^n(x)$ as $n$ grows? Many authors have examined this question, in general with the aim of giving criteria that ensure all iterates are irreducible (see, e.g.,\cite{itconst}\cite[Section 4]{odoniwn}). Most usefully for our purposes, Danielson and Fein \cite{fein} examine the case when $f(x) = x^d + m$, for $d \geq 2$. They show, for instance, that if $m \in \Z$ and $f(x)$ is irreducible, then all iterates of $f$ are irreducible. In fact they only assume that 
$K$ is the quotient field of a unique factorization domain $R$, and in this case they show that certain strong diophantine conditions must be satisfied when $f^n(x)$ is irreducible and $f^{n+1}(x)$ is reducible. In particular, for $K = \Q$, they take $S(d,n)$ to be the set of $m \in \Q$ such that $f^{n+1}(x)$ is newly reducible.
Further, let $S(d) = \bigcup_{n \geq 1} S(d,n)$. In \cite[Theorem 7]{fein} it is shown, among other things, that $S(2,1)$ (and thus $S(2)$) is infinite, $S(3,n)$ is finite for all $n \geq 1$, and $S(d)$ is finite for $d$ odd, $d \geq 5$. Moreover, the abc-conjecture implies that $S(d)$ is finite for $d$ even, $d \geq 4$. 

One goal of the present paper is to determine whether $S(2,n)$ is finite for $n \geq 2$. Our main result, however, is significantly more general. 
Consider the family of polynomials
\begin{equation} \label{thefamily}
g_{\gamma,m}(x)=(x-\gamma)^2+m+\gamma, \qquad \gamma,m \in K,
\end{equation} 
where $K$ is a number field.  Denote the ring of integers of $K$ by $\mathcal{O}_K$.
Our main result is the following:

\begin{thm} \label{smores bites}
Let $K$ be a number field, $v_\p$ the valuation attached to a prime $\p$ of $\mathcal{O}_K$, and $g_{\gamma,m}(x)$ as in \eqref{thefamily}. If one of the following hold, then there are only finitely many $m$ such that $g_{\gamma,m}^{n}(x)$ is irreducible and $g_{\gamma,m}^{n+1}(x)$ is reducible:
\begin{enumerate}
\item $n \ge 3$ and there exists a prime $\p$ of $\mathcal{O}_K$ with $v_\p(2) = e \geq 1$ and $v_\p(\gamma) = s$ with $s \neq -e2^i$ for all $i \geq 1$. 
\item $n=2$ and $\gamma = r/4$ for $-200 \leq r \leq 200$.
\end{enumerate}
\end{thm}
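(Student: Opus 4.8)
The plan is to reduce the irreducibility/reducibility question for iterates of $g_{\gamma,m}$ to a Diophantine problem on curves, following the strategy announced in the abstract. The starting observation is a standard one in the theory of iterated quadratics: if $g = g_{\gamma,m}$ has $g^n(x)$ irreducible over $K$, then $g^{n+1}(x)$ is reducible over $K$ if and only if the ``adjusted critical orbit value'' $-g^n(\gamma)$ (up to the appropriate normalization coming from writing $g$ in the form $(x-\gamma)^2 + (m+\gamma)$, i.e. $\gamma$ is the critical point and $g(\gamma) = m + \gamma$) is a square in the field $K[x]/(g^n(x))$. A capitulation/norm argument — the kind used in \cite{fein} and in the quadratic-iterate literature — then forces $(-1)^{\deg g^n}\, g^n(\gamma)$, possibly times a bounded correction factor, to be a square in $K$ itself, or more precisely forces $g^{n}(\gamma)$ to lie in a specific thin set. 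So I would first prove: \emph{there is a finite set $T \subset K^\times/(K^\times)^2$ such that if $g^{n+1}_{\gamma,m}$ is newly reducible then $\delta\, g^n_{\gamma,m}(\gamma) \in (K^\times)^2$ for some $\delta \in T$} (with $\delta$ depending only on $n$ and $\gamma$, not on $m$).

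Next, fix $\gamma$ and $\delta$ and regard $g^n_{\gamma,m}(\gamma)$ as a polynomial in $m$: writing $c_k = g^k_{\gamma,m}(\gamma)$, we have $c_1 = m+\gamma$ and the recursion $c_{k+1} = (c_k - \gamma)^2 + m + \gamma = (c_k-\gamma)^2 + c_1$, so $c_n \in K[m]$ is a polynomial of degree $2^{n-1}$ in $m$. The condition $\delta\, c_n(m) = y^2$ then defines a hyperelliptic curve $C_{\gamma,\delta}: y^2 = \delta\, c_n(m)$, and newly reducible iterates give rise to $K$-rational points on one of the finitely many curves $C_{\gamma,\delta}$. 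By Faltings' theorem it suffices to show each such curve has genus $\geq 2$, i.e. (since $\deg c_n = 2^{n-1}$) that $c_n(m)$ has at least five distinct roots in $\overline{K}$ when $n \geq 3$ (giving genus $\geq 2$ for the hyperelliptic model), and for $n = 2$ that the relevant quartic-type condition still yields genus $\geq 2$. The squarefreeness/root-counting for $c_n(m)$ is where the hypotheses on $\gamma$ enter: I expect one shows $c_n(m)$ has no repeated roots by a discriminant or resultant computation, and the condition $v_\p(\gamma) = s$ with $s \neq -e2^i$ is precisely what is needed to run a $\p$-adic Newton-polygon argument guaranteeing $c_n$ (hence $\delta c_n$) is separable of the full degree $2^{n-1} \geq 4$. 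For $n = 2$, $c_2(m) = m^2 + (2\gamma - 1)m + (\gamma^2 - 1)$ has degree only $2$ (wait: in $m$, $c_2 = (c_1 - \gamma)^2 + c_1 = m^2 + m + \gamma$), so the naive curve has genus $0$; here one must instead use that \emph{two} successive critical-orbit conditions, or the full factorization datum, pin down a higher-genus curve — this is why only the explicit finite range $\gamma = r/4$, $|r| \leq 200$ is treated, presumably by a case-by-case genus computation on a computer algebra system.

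I would organize the write-up as: (i) the algebraic reduction lemma producing the finite set $T$ of twist parameters; (ii) identification of the polynomial $c_n(m) = g^n_{\gamma,m}(\gamma) \in K[m]$ and its degree; (iii) the local analysis at $\p$ showing separability and the exact degree of $c_n$, hence a lower bound on the genus of $C_{\gamma,\delta}$; (iv) invoking Faltings to conclude finiteness for case (1); and (v) the separate, more computational treatment of $n=2$ with $\gamma = r/4$, where the curves are exhibited explicitly and their genera checked to be at least $2$ for each of the $401$ values of $r$ (after discarding any sporadic $r$ for which the genus drops, which the statement implicitly asserts does not happen in this range).

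The main obstacle I anticipate is step (iii): controlling the factorization type of $c_n(m)$ over $\overline{K}$ uniformly in $n$. Bounding the genus below requires ruling out that $\delta\, c_n(m)$ is a square, or a square times a low-degree polynomial, in $K[m]$ — equivalently that $c_n$ is not too far from squarefree — and the orbit recursion $c_{k+1} = (c_k - \gamma)^2 + c_1$ makes the discriminant of $c_n$ a complicated iterated resultant. The $\p$-adic hypothesis on $v_\p(\gamma)$ is evidently the device that tames this: I expect the Newton polygon of $c_n(m)$ over $K_\p$ to have slopes that are forced to be distinct (or to produce distinct roots) exactly when $s \neq -e 2^i$, so that $c_n$ is separable over $K_\p$ and a fortiori over $\overline{K}$. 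Making this Newton-polygon computation precise, and checking it does give genus $\geq 2$ rather than just genus $\geq 1$ (which would only yield finiteness of integral, not rational, points), is the crux; everything else is either standard iterated-quadratic algebra or a finite computation.
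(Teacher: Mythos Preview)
Your overall strategy matches the paper's: reduce to $K$-rational points on a curve $y^2 = (\text{critical-orbit polynomial in } m)$, prove separability via a $\mathfrak{p}$-adic Newton-polygon argument under the stated hypothesis on $v_{\mathfrak{p}}(\gamma)$, and invoke Faltings. Two corrections, however, are substantive.

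\textbf{Indexing.} The quantity forced to be a square in $K$ is $g^{n+1}_{\gamma,m}(\gamma)$, not $g^{n}_{\gamma,m}(\gamma)$. The paper's mechanism is a root-pairing lemma: if $g^n_{\gamma,m}$ is irreducible and $g^{n+1}_{\gamma,m} = p_1 p_2$, then the roots of $p_1$ and $p_2$ are interchanged by $\alpha \mapsto 2\gamma - \alpha$; evaluating at $x=\gamma$ gives $p_1(\gamma)=p_2(\gamma)$, hence $g^{n+1}_{\gamma,m}(\gamma)=p_1(\gamma)^2 \in K^2$. This yields the curve $y^2 = t_{n+1}(m)$ with $\deg t_{n+1} = 2^{n}$. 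Your version, with $c_n = g^n_{\gamma,m}(\gamma)$ of degree $2^{n-1}$, gives only a degree-$4$ curve at $n=3$, genus at most $1$, and Faltings does not apply. Your own sentence ``$c_n(m)$ has at least five distinct roots in $\overline{K}$ when $n\ge 3$'' is false as written, since $\deg c_3 = 4$. With the correct index, $\deg t_{n+1} = 2^n \ge 8$ for $n\ge 3$ and separability gives genus $\ge 3$. (Your norm heuristic, done carefully, actually lands on the same object: $N_{K(\beta)/K}(\beta - m - \gamma) = g^n_{\gamma,m}(m+\gamma) = g^{n+1}_{\gamma,m}(\gamma)$.)

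\textbf{Twists.} Your finite set $T \subset K^\times/(K^\times)^2$ is unnecessary. The root-pairing argument gives $g^{n+1}_{\gamma,m}(\gamma) \in K^2$ exactly, with no twist; equivalently, if $\beta - m - \gamma = \xi^2$ in $K(\beta)$ then the norm already equals $N(\xi)^2$. So there is a single curve, not a finite family.

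\textbf{The case $n=2$.} You are right that the naive curve (now $y^2 = t_3(m)$, degree $4$, genus $1$) is insufficient, and that the resolution is computational. But the paper does not use ``two successive critical-orbit conditions.'' Instead it writes $p_i(x) = (x-\gamma)^4 + a_3(x-\gamma)^3 + \cdots$ (resp.\ $b_j$), equates $p_1 p_2 = g^3_{\gamma,m}$ coefficientwise, and computes a Gr\"obner basis of the resulting system. Eliminating all variables except $a_3$ and $m$ produces a single plane curve $C_\gamma \subset \mathbb{A}^2_{a_3,m}$ of degree $16$; a MAGMA check then verifies $g(C_\gamma) \ge 7$ for each $\gamma = r/4$, $|r|\le 200$. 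Your phrase ``full factorization datum \ldots case-by-case genus computation'' gestures correctly but does not identify this elimination step, which is the actual content of part (2).
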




In particular, when $K = \Q$, part (1) of Theorem \ref{smores bites} holds when $v_2(\gamma)$ is not of the form $-2^j$ for $j \geq 1$. Hence when $\gamma = 0$, we obtain that $S(2,n)$ is finite for $n \geq 2$ (in the notation of \cite{fein}); in other words, for each $n \geq 2$ there are at most finitely many $m \in \Q$ such that $x^2 + m$ has a newly reducible $(n+1)$st iterate. In Proposition \ref{deep dish pizza}, we show further that $S(2,3)$ is empty. Note also that part (1) of Theorem \ref{smores bites} applies whenever $\gamma$ belongs to the ring of integers of $K$, and in particular for $\gamma \in \Z$. In fact, part (1) holds whenever $\gamma$ is taken so that:
\begin{equation} \label{nonsingcond}
\text{$g_{\gamma,m}^i(\gamma)  \in K[m]$ does not have repeated roots for any $i \geq 1$.} 
\end{equation}
(See Theorem \ref{baked ziti}, Proposition \ref{twizzler}, and the discussion immediately preceding Proposition \ref{twizzler}.) Condition \eqref{nonsingcond} is the same as the condition appearing in \cite{preimagecurve} for the \textit{preimage curve} $Y^{\textrm{pre}}(i,-\gamma)$, given by the vanishing of the polynomial $(g_{0,m}^i(x) + \gamma) \in K[x,m]$, to be non-singular for all $i \geq 1$. In Proposition \ref{twizzler}, we give a new criterion ensuring \eqref{nonsingcond} holds for given $\gamma$, thereby improving \cite[Proposition 4.8]{preimagecurve}. It seems reasonable to believe that part (1) of Theorem \ref{smores bites} holds even when condition \eqref{nonsingcond} fails; see the remark following the proof of Proposition \ref{twizzler}.

There are infinitely many $m$ such that $g_{\gamma,m}(x)$ is irreducible and $g_{\gamma,m}^{2}(x)$ is reducible, and one can explicitly describe them (see Theorem \ref{macNcheese}), thus settling the $n=1$ case. When $\gamma = 0$ this result is \cite[Proposition 2]{fein}.  For given $K$, let us denote by $S(2,n,\gamma)$ the set of $m \in K$ such that $g_{\gamma,m}^{n+1}(x)$ is newly reducible.
We note that that even when $K = \Q$, the sets $S(2,n,\gamma)$ may be non-empty. For instance, when $f(x) = x^2 - x - 1$, corresponding to $\gamma = 1/2$ and $m = -7/4$, we have that $f(x)$ and $f^2(x)$ are irreducible but
\begin{equation} \label{n3example}
f^3(x)=(x^4 - 3x^3 + 4x - 1)(x^4 - x^3 - 3x^2 + x + 1),
\end{equation}
and thus $-7/4 \in S(2,2,1/2)$. For $K = \Q$, the sets $S(2,n,\gamma)$ are likely to be empty for $n \geq 3$, since as we will see they correspond to rational points on high-genus curves. However, without effective algorithms to find such points, a new approach will be required to precisely determine $S(2,n,\gamma)$. 

To prove Theorem \ref{smores bites}, we first examine the case where $n\ge 3$ and use the fact that comparing constant terms of a hypothetical non-trivial factorization of $g_{\gamma,m}^{n+1}(x)$ gives rise to $K$-rational points on a hyperelliptic curve (at least for the $\gamma$ described in part (1) of Theorem \ref{smores bites}). This allows us to use Faltings' Theorem to conclude that $S(2,n,\gamma)$ is finite for these $\gamma$ and for $n \ge 3$. We then examine the $n=2$ case using a system of equations generated from a factorization of the third iterate. After defining certain cases for this system, we use Faltings' Theorem on a plane curve arising from the Groebner basis of the system to show that $S(2,2, \gamma)$ is finite for certain $\gamma$. 

\section{The $n=1$ Case}

Before we approach the main theorem, let's examine the case where $n=1$. It is possible for $g_{\gamma,m}^2(x)$ to be reducible and $g_{\gamma,m}(x)$ irreducible: 
\begin{exa} \label{firstex}
Let $\gamma=0$, $m=-\frac{4}{3}$, and $K=\mathbb{Q}$. Then $$g_{0,-\frac{4}{3}}(x)=x^2-\frac{4}{3}$$ is irreducible over $\mathbb{Q}$ since $\frac{4}{3}$ is not a rational square. However, we have $$g_{0,-\frac{4}{3}}^2(x)=\Bigg(x^2-\frac{4}{3}\Bigg)^2-\frac{4}{3}=\Bigg(x^2-2x+\frac{2}{3}\Bigg)\Bigg(x^2+2x+\frac{2}{3}\Bigg).$$ 
\end{exa}

Because it has degree $4$, $g_{\gamma,m}^2(x)$ could a priori have non-trivial factors of degree 1, 2, or 3. We will show in Corollary \ref{pulled pork} that if $g_{\gamma,m}(x)$ is irreducible, then the only non-trivial factorization for $g_{\gamma,m}^2(x)$ is $p_1(x)p_2(x)$ with $\deg p_1(x)=\deg p_2(x)=2$.

\begin{thm} \label{macNcheese} We have $g_{\gamma,m}(x)$ irreducible and $g_{\gamma,m}^2(x)$ reducible if and only if either
\begin{enumerate} 
\item $\gamma \neq 1/4$ and ${\displaystyle m=\frac{c_1^4-4\gamma}{4-4c_1^2}}$, where  $c_1\in K \setminus \{-1,1\}$ and ${\displaystyle \frac{4\gamma-c_1^2}{1-c_1^2}} \notin K^{*2}$; or 
\vspace{0.1 in}
\item $\gamma = 1/4$ and $-4m - 1 \notin K^{*2}$.
\end{enumerate}
In particular, for each $\gamma \in K$, the set $S(2,1,\gamma)$ is infinite.
\end{thm}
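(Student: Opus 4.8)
The plan is to analyze a hypothetical nontrivial factorization of $g_{\gamma,m}^2(x)$ directly, using the fact (to be established in Corollary \ref{pulled pork}) that when $g_{\gamma,m}(x)$ is irreducible the only possible nontrivial factorization of the quartic $g_{\gamma,m}^2(x)$ is into two monic quadratics $p_1(x)p_2(x)$. Write $g_{\gamma,m}(x) = (x-\gamma)^2 + (m+\gamma)$, set $h(x) = g_{\gamma,m}(x+\gamma) = x^2 + (m+\gamma)$ so that $g_{\gamma,m}^2(x+\gamma) = h(h(x)+\gamma) = (h(x)+\gamma)^2 - \gamma + \ldots$; more simply, I would just observe $g_{\gamma,m}^2(x) = \big(g_{\gamma,m}(x)-\gamma\big)^2 + (m+\gamma)$ and substitute $u = g_{\gamma,m}(x)-\gamma$, reducing everything to understanding when $g_{\gamma,m}(x)-\gamma = \pm(x-\gamma-c_1)(x-\gamma+c_1) $ splits off, i.e.\ when the quartic $(x-\gamma)^4 + \ldots$ factors. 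Concretely, assume
\[
g_{\gamma,m}^2(x) = \big(x^2 + bx + c\big)\big(x^2 - bx + c'\big)
\]
(the linear coefficients are negatives of each other since $g_{\gamma,m}^2$ has no $x^3$ term after accounting for the shift by $\gamma$; one must be slightly careful and allow a shift, so really write $x^2+bx+c$ and $x^2+b'x+c'$ and use the three coefficient equations). Comparing the coefficients of $x^3, x^2, x^1, x^0$ gives a system in $b, b', c, c'$ and $m$; eliminating variables should force $b' = -b$ (using irreducibility of $g_{\gamma,m}$ to rule out the degenerate case $b=0$, which would make $g_{\gamma,m}(x)-\gamma$ or a related quadratic reducible) and then express $c, c'$ and ultimately $m$ in terms of a single parameter.

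The key computation is then to solve this system. Setting $c_1$ to be the relevant free parameter (it will turn out that $b = -2c_1$ or similar, so that $c_1$ measures how the middle quadratic factors), I expect the coefficient equations to collapse to $c + c' - b^2 = 2(m+\gamma) - 2\gamma + \ldots$ type relations, ultimately yielding $m = \dfrac{c_1^4 - 4\gamma}{4 - 4c_1^2}$ when $\gamma \neq 1/4$, with the special value $\gamma = 1/4$ emerging precisely because the denominator factor $1 - c_1^2$ interacts with a numerator term — one should check that $\gamma = 1/4$ is exactly the case where the parametrization degenerates and a separate, simpler analysis (the quartic factors iff its resolvent-type quantity $-4m-1$ is a square) applies. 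Throughout, I would track the \emph{irreducibility} constraint on $g_{\gamma,m}(x) = (x-\gamma)^2 + (m+\gamma)$, which says $-(m+\gamma) \notin K^{*2}$; substituting the formula for $m$ and simplifying should convert this into the stated condition $\dfrac{4\gamma - c_1^2}{1-c_1^2} \notin K^{*2}$, and the exclusions $c_1 \notin \{-1,1\}$ are forced by the denominator. The reverse direction (given such an $m$, exhibit the factorization and verify $g_{\gamma,m}$ is irreducible) is then just back-substitution.

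For the final sentence, that $S(2,1,\gamma)$ is infinite: once the "if and only if" is in hand, it suffices to produce infinitely many $c_1 \in K \setminus\{-1,1\}$ for which $\dfrac{4\gamma - c_1^2}{1-c_1^2} \notin K^{*2}$ and which yield distinct values of $m$. Distinctness of the $m$-values is clear since $c_1 \mapsto \dfrac{c_1^4 - 4\gamma}{4-4c_1^2}$ is a nonconstant rational function, hence finite-to-one. For the non-square condition, I would argue that the set of $c_1 \in K$ making $\dfrac{4\gamma - c_1^2}{1-c_1^2}$ a square is "thin" — it is the image of the $K$-rational points on the curve $w^2(1-c_1^2) = 4\gamma - c_1^2$, a conic (genus $0$), but the complementary set is still infinite because for all but finitely many primes $\p$ one can choose $c_1$ with prescribed nonzero residue making the valuation $v_\p$ of the quotient odd, or more cleanly: pick a prime $\p$ with $v_\p(4\gamma) $ even and $v_\p(4\gamma) \le 0$ isn't forced; simplest is to note that as $c_1$ ranges over $\mathcal{O}_K$ with $c_1 \equiv 1 \pmod{\p}$ but $c_1 \not\equiv 1 \pmod{\p^2}$ for a suitable $\p \nmid 2\gamma(4\gamma-1)$, the numerator $4\gamma - c_1^2$ is a $\p$-adic unit while $1 - c_1^2$ has odd valuation, so the quotient has odd valuation and cannot be a square. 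There are infinitely many such $c_1$, completing the proof.

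The main obstacle I anticipate is not any single step but bookkeeping: correctly setting up the coefficient-comparison system allowing for the $\gamma$-shift, cleanly isolating the degenerate $\gamma = 1/4$ branch, and — most delicately — translating the square-class constraints through the substitution without sign or index-of-$K^{*2}$ errors. The infinitude at the end is routine once one commits to a valuation-theoretic argument rather than trying to be fully explicit about which $c_1$ work for every $K$.
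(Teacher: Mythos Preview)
Your approach is essentially the paper's: write the two quadratic factors in $(x-\gamma)$-coordinates, compare coefficients to get $b_1=-c_1$ and (after ruling out $c_1=0$) $b_0=c_0$, then solve the resulting two-equation system to obtain $m=(c_1^4-4\gamma)/(4-4c_1^2)$, with $\gamma=1/4$ appearing as the degenerate case and the irreducibility of $g_{\gamma,m}$ translating into the stated non-square condition. The only differences are that the paper obtains $b_0=c_0$ via a forward reference to Theorem~\ref{baked ziti} rather than by excluding $c_1=0$ directly, and the paper leaves the infinitude of $S(2,1,\gamma)$ entirely implicit in the parametrization, whereas you supply a valuation argument.
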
 
 
\begin{remark}
It is interesting to note that when $\gamma = 1/4$, we have 
\begin{equation} \label{badfamily}
g^2_{1/4,m}(x) = \left(x^2 - \frac{3}{2}x+ (m + 13/16) \right) \left(x^2 + \frac{1}{2}x + (m + 5/16) \right),
\end{equation}
and so $g^2_{1/4,m}(x)$ is reducible for all $m \in K$. This phenomenon has already been noticed, albeit in somewhat different language, in \cite[Remark 2.6 and p. 94]{preimagecurve}.
\end{remark} 
 
\begin{proof}
 Suppose that $g_{\gamma,m}(x)$ is irreducible and $g_{\gamma,m}^2(x)$ is reducible, so that $g_{\gamma,m}^2(x)=p_1(x)p_2(x)$. Write $p_1(x)=(x-\gamma)^2+b_1(x-\gamma)+b_0$ and $p_2(x)=(x-\gamma)^2+c_1(x-\gamma)+c_0$, where $b_i,c_i\in K$, and note that
\begin{equation}
g_{\gamma,m}^2(x)=(x-\gamma)^4+2m(x-\gamma)^2+m^2+m+\gamma.
\end{equation}
Comparing coefficients in the equality $g_{\gamma,m}^2(x)=p_1(x)p_2(x)$ gives the following system of equations:
\begin{multicols}{2}
\begin{enumerate}
\item $c_1+b_1=0$
\item $c_0+b_1c_1+b_0=2m$
\item $b_1c_0+b_0c_1=0$
\item $b_0c_0=m^2+m+\gamma.$
\end{enumerate}
\end{multicols}
Clearly $b_1=-c_1$ from (1). We show $b_0 = c_0$ in Theorem \ref{baked ziti} (see \eqref{consteq}). So we have the following system of two equations:
\begin{enumerate} 
\item[(a)] $2c_0-c_1^2-2m=0$
\item[(b)] $c_0^2-m^2-m-\gamma=0.$
\end{enumerate}
Solving (1) for $c_0$ and substituting the result into equation (2) gives
\begin{equation} \label{maineq}
c_1^4+4mc_1^2-4m-4\gamma = 0.
\end{equation}
If $c_1 = \pm 1$, then $\gamma = 1/4$. Thus in the case where $\gamma \neq 1/4$, we may solve \eqref{maineq} for $m$ to obtain $m=(c_1^4-4\gamma)/(4-4c_1^2).$ Because $g_{\gamma,m}(x)$ is assumed to be irreducible, we clearly have from \eqref{thefamily} that $-m - \gamma \notin K^{*2}$, and one computes $-m - \gamma = (c_1^2(4\gamma-c_1^2))/(4(1-c_1^2))$. In the case where $\gamma = 1/4$, we may take 
$c_1 = \pm 1$ and $c_0 = (1 + 2m)/2$ to get a solution to equations (a) and (b) (this is the same as the factorization in \eqref{badfamily}). Hence $g^2_{1/4,m}(x)$ is reducible for all $m \in K$. Clearly $-m - \gamma = -m - 1/4$, which is in $K^{*2}$ if and only if $-4m - 1$ is. 

Assume now that either (1) or (2) holds. Then $-m - \gamma$ is not in $K^{*2}$, so $g_{\gamma,m}(x)$ is irreducible. The other hypotheses ensure that equations (a) and (b) above have solutions in $K$, and hence $g^2_{\gamma,m}(x)$ is reducible. 
\end{proof}

Note that when $\gamma = 0$, taking $c_1 = 2$ in Theorem \ref{macNcheese} yields Example \ref{firstex}. We also remark that in the case of $\gamma = 0$, taking $c_1 = 2z$ in Theorem \ref{macNcheese} yields Proposition 2 of \cite{fein}, at least in the case where $K$ is a number field. (Note that in \cite{fein} the polynomial under consideration is $x^2 - m$, and hence the results differ by a minus sign.)

\section{The $n\ge3$ Case}

To prove Theorem \ref{smores bites}, there are multiple issues we must address, and thus we prove it in several steps. 
Having handled the $n = 1$ case, we now address the case where $n\ge3$ (rather than $n=2$, which is the subject of Section \ref{twocase}) because the former allows us to work with hyperelliptic curve of genus at least two, while the latter does not. Therefore we will be able to directly prove the $n\ge3$ case, while additional work is involved in proving the $n=2$ case. 

Understanding the roots of $g_{\gamma,m}^{n+1}(x)$ is important to proving some key results. 
In general, if $\beta_i$ is a root of $g_{\gamma,m}^n(x)$, then the two roots of $g_{\gamma,m}(x) - \beta_i$ are roots of $g_{\gamma,m}^{n+1}(x)$. Calling them $\alpha_i^+$ and $\alpha_i^-$, we have $\alpha_i^+=\gamma+\sqrt{\beta_i-m-\gamma}$ and $\alpha_i^-=\gamma-\sqrt{\beta_i-m-\gamma}$. Note that $$2\gamma-\alpha_i^+=2\gamma-(\gamma+\sqrt{\beta_i-m-\gamma})=\gamma-\sqrt{\beta_i-m-\gamma}=\alpha_i^-.$$ The following picture summarizes the relation of the roots to one another. Note that they are arranged in a tree.

\begin{figure}[htp]
	\resizebox{\textwidth}{!}{
		\includegraphics[width=50pt]{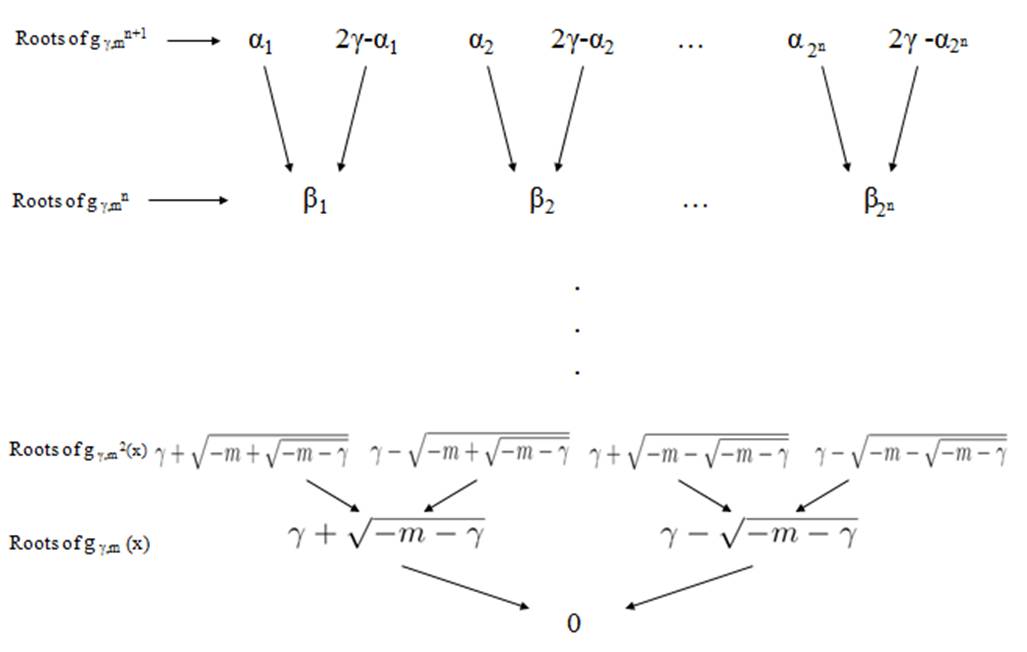} 
		}
	\end{figure}

In this section we establish two principal results on the structure of hypothetical factors in the case where $g_{\gamma,m}^{n+1}(x)$ is newly reducible. Our first result is similar to \cite[Proposition 2.6]{settled}.

\begin{thm} \label{pumpkin bread} Let $g_{\gamma,m}(x)=(x-\gamma)^2+\gamma+m$, as above.  Suppose $g_{\gamma,m}^n(x)$ is irreducible, and $g_{\gamma,m}^{n+1}(x)=p_1(x)p_2(x)$ where $p_1(x)$ and $p_2(x)$ are non-trivial factors. If $\alpha$ is a root of $p_1(x)$, then $2\gamma-\alpha$ is not a root of $p_1(x)$. \end{thm}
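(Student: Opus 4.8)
The plan is to first determine the shape of the factorization forced by the hypotheses, and then run a short Galois argument. Write $g=g_{\gamma,m}$, so $\deg g^{n}=2^{n}$ and $\deg g^{n+1}=2^{n+1}$. For any root $\alpha$ of $g^{n+1}$, put $\beta=g(\alpha)$, a root of the irreducible polynomial $g^{n}$, so $[K(\beta):K]=2^{n}$; since $\alpha$ is a root of $g(x)-\beta\in K(\beta)[x]$, a polynomial of degree $2$, we get $K(\beta)\subseteq K(\alpha)$ and $[K(\alpha):K]\in\{2^{n},2^{n+1}\}$. If some root of $g^{n+1}$ had degree $2^{n+1}$ over $K$, its minimal polynomial would be all of $g^{n+1}$, making $g^{n+1}$ irreducible — contrary to hypothesis. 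Hence every root $\alpha$ of $g^{n+1}$ has $[K(\alpha):K]=2^{n}$, so $K(\alpha)=K(\beta)$; comparing degrees, $g^{n+1}=q_{1}q_{2}$ with $q_{1},q_{2}$ irreducible of degree $2^{n}$. Since $g^{n+1}=p_{1}p_{2}$ with $p_{1},p_{2}$ non-constant, unique factorization in $K[x]$ forces each $p_{i}$ to be a scalar multiple of one of the $q_{j}$; in particular $p_{1}$ is irreducible.

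Now suppose for contradiction that $\alpha$ and $2\gamma-\alpha$ are both roots of $p_{1}$. Being roots of the same irreducible polynomial over $K$, they are Galois conjugate, so there is $\sigma\in\mathrm{Gal}(\overline{K}/K)$ with $\sigma(\alpha)=2\gamma-\alpha$. Since $\gamma,m\in K$, the automorphism $\sigma$ commutes with $g$, and using $g(2\gamma-\alpha)=((2\gamma-\alpha)-\gamma)^{2}+\gamma+m=(\alpha-\gamma)^{2}+\gamma+m=g(\alpha)$ (the relation already recorded before the theorem) we obtain
$$\sigma(\beta)=\sigma(g(\alpha))=g(\sigma(\alpha))=g(2\gamma-\alpha)=g(\alpha)=\beta.$$
Thus $\sigma$ fixes $\beta$, hence fixes $K(\beta)$ pointwise; but $\alpha\in K(\alpha)=K(\beta)$, so $\sigma(\alpha)=\alpha$, forcing $2\gamma-\alpha=\alpha$, i.e.\ $\alpha=\gamma\in K$. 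This contradicts $[K(\alpha):K]=2^{n}\geq 2$. Hence $2\gamma-\alpha$ is not a root of $p_{1}$.

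The step I expect to carry the weight is the first paragraph: showing that a newly reducible $g^{n+1}$ must split into exactly two factors of degree $2^{n}$ — so that the unspecified non-trivial factor $p_{1}$ is automatically irreducible — and, equivalently, that $K(\alpha)=K(g(\alpha))$ for every root $\alpha$ (i.e.\ $\sqrt{\beta-m-\gamma}\in K(\beta)$). This is the same structural input used in \cite[Proposition 2.6]{settled}. Once it is available the conjugacy computation is purely formal; the only point needing a glance is that the degenerate case $\alpha=\gamma$ is genuinely excluded, which holds because $\deg p_{1}=2^{n}\geq 2$ forces $\alpha\notin K$.
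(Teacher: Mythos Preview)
Your proof is correct, but it runs in the opposite logical direction from the paper's. The paper's argument never assumes $p_1$ is irreducible; it only uses that $p_1\in K[x]$, so that $\mathrm{Gal}(\overline{K}/K)$ permutes its roots. Assuming both $\alpha$ and $2\gamma-\alpha$ lie among those roots, the paper picks, for an arbitrary root $\alpha'$ of $g^{n+1}$, an automorphism $\phi$ with $\phi(g(\alpha))=g(\alpha')$ (transitivity on the roots of the irreducible $g^n$), checks that $\{\phi(\alpha),\phi(2\gamma-\alpha)\}=\{\alpha',2\gamma-\alpha'\}$, and concludes that every root of $g^{n+1}$ lies in $p_1$, forcing $\deg p_1=2^{n+1}$. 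The irreducibility of $p_1,p_2$ is then derived \emph{afterwards} as Corollary~\ref{pulled pork}, using this theorem as input.

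You instead prove (a sharper form of) that corollary first, by the clean degree count $[K(\alpha):K]\in\{2^n,2^{n+1}\}$ together with reducibility of $g^{n+1}$, obtaining both that $p_1$ is irreducible and that $K(\alpha)=K(g(\alpha))$. That last equality is exactly what powers your one-line contradiction: any $\sigma$ sending $\alpha$ to $2\gamma-\alpha$ fixes $\beta=g(\alpha)$, hence fixes $K(\beta)=K(\alpha)$, hence fixes $\alpha$. This is a genuinely different route: the paper's orbit argument is more elementary (it avoids the tower-of-fields reasoning and works for arbitrary non-trivial $p_1$), while your approach has the virtue of giving an independent and arguably tidier proof of Corollary~\ref{pulled pork} along the way, and then finishing the theorem with a single automorphism rather than a transitivity sweep. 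Your handling of the degenerate case $\alpha=\gamma$ is also correct, since $g(\gamma)=m+\gamma\in K$ cannot be a root of the irreducible $g^n$ of degree $2^n\ge 2$.
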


\begin{proof}
We proceed by contradiction. Assume there is a root $\alpha$ of $p_1(x)$ such that $p_1(\alpha)=p_1(2\gamma-\alpha)=0$. Let $E_{n+1}$ be the splitting field of $g_{\gamma,m}^{n+1}(x)$ over $K$ and $E_n$ be the splitting field of $g_{\gamma,m}^n(x)$ over $K$. Then $K\subset E_n\subset E_{n+1}$. Consider the Galois groups $G_n=\mathrm{Gal}(E_n/K)$ and $G_{n+1}=\mathrm{Gal}(E_{n+1}/K)$. The action of $G_{n+1}$ on the roots of $g_{\gamma,m}^n(x)$ is the same as the action of $G_n$ on the roots of $g_{\gamma,m}^n(x)$; in other words, $G_{n+1}$ acts transitively on the roots of $g_{\gamma,m}^n(x)$ because $g_{\gamma,m}^n(x)$ is irreducible over $K$.

\textit{Claim:}  If $\alpha'$ is any root of $g_{\gamma,m}^{n+1}(x)$, then $p_1(\alpha')=0$.
Let $\phi\in G_{n+1}$ such that $\phi(g_{\gamma,m}(\alpha))=g_{\gamma,m}(\alpha')$ since $G_{n+1}$ acts transitively on roots of $g_{\gamma,m}^n(x)$. Then,
\begin{eqnarray}
\phi(g_{\gamma,m}(\alpha)) & = & g_{\gamma,m}(\alpha') \nonumber\\
\phi((\alpha-\gamma)^2+\gamma+m ) & = & (\alpha'-\gamma)^2+\gamma+m \nonumber\\
\phi(\alpha-\gamma)^2+\gamma+m & = & (\alpha'-\gamma)^2+\gamma+m \nonumber\\
\phi(\alpha-\gamma) & = & \pm (\alpha'-\gamma) \nonumber\\
\phi(\alpha)-\gamma & = & \pm (\alpha'-\gamma) \nonumber\\
\phi(\alpha) & = & \gamma\pm (\alpha'-\gamma) \nonumber.
\end{eqnarray}
Thus either $\phi(\alpha) = \gamma+(\alpha'-\gamma) = \alpha'$ or $\phi(\alpha) = \gamma-(\alpha'-\gamma) = 2\gamma - \alpha'$. 

\noindent If $\phi(\alpha)=\alpha'$, then $$\phi(2\gamma-\alpha)=2\gamma-\phi(\alpha)=2\gamma-\alpha'.$$ On the other hand, if $\phi(\alpha)=2\gamma-\alpha'$, then $$\phi(2\gamma-\alpha)=2\gamma-\phi(\alpha)=2\gamma-(2\gamma-\alpha')=\alpha'.$$ Therefore, $\alpha$ and $2\gamma-\alpha$ collectively map to all roots $\alpha_i$ of $g_{\gamma,m}^{n+1}(x)$. Since all roots of $p_1(x)$ must be mapped to other roots of $p_1(x)$ under $\phi$, all $\alpha_i$ are roots of $p_1(x)$.  Consequently $p_1(x)$ has $2^{n+1}$ roots and degree $2^{n+1}$. 
Since $\deg p_1(x) + \deg p_2(x)= \deg g_{\gamma,m}^{n+1}(x)= 2^{n+1}$, we have $\deg p_2(x)=0$, contradictiing the hypothesis that $p_1(x)$ and $p_2(x)$ are non-trivial factors.
\end{proof}

\begin{cor} \label{pulled pork}
Let $g_{\gamma,m}(x)=(x-\gamma)^2+m+\gamma$ with $\gamma, m \in K$. Let $n \in \mathbb{Z}^+$, and assume $g_{\gamma,m}^n(x)$ is irreducible with $g_{\gamma,m}^{n+1}(x)=p_1(x)p_2(x)$, where $p_1(x)$ and $p_2(x)$ are non-trivial factors. Then, $\deg p_1(x) = \deg p_2(x) = 2^n$, and $p_1(x)$ and $p_2(x)$ are irreducible.
\end{cor}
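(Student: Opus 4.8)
The plan is to bootstrap from Theorem~\ref{pumpkin bread} using the tree structure of the roots together with transitivity of the Galois action. Write $g=g_{\gamma,m}$. The first step is to observe that $g^{n+1}(x)$ is separable. Indeed $g^{n+1}(x)=g^n(g(x))$, and writing $\beta_1,\dots,\beta_{2^n}$ for the (distinct, since $g^n$ is irreducible in characteristic $0$) roots of $g^n(x)$, we have $g^{n+1}(x)=c\prod_i\bigl(g(x)-\beta_i\bigr)$ up to a constant $c$; the quadratic $g(x)-\beta_i=(x-\gamma)^2+(\gamma+m-\beta_i)$ has a double root only if $\beta_i=\gamma+m=g(\gamma)\in K$, which is impossible because $g^n(x)$ is irreducible of degree $2^n\ge 2$ and hence has no root in $K$. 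So $g^{n+1}(x)$ has $2^{n+1}$ distinct roots, grouped into $2^n$ ``sibling pairs'' $\{\gamma+\sqrt{\beta_i-\gamma-m},\ \gamma-\sqrt{\beta_i-\gamma-m}\}$, one lying above each $\beta_i$ under $g$. Since $g\in K[x]$, any $K$-automorphism of the splitting field permutes these pairs compatibly with its action on the $\beta_i$.

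For the degree claim, note that since $g^{n+1}(x)=p_1(x)p_2(x)$ is separable, $p_1$ and $p_2$ are coprime and each root of $g^{n+1}(x)$ is a root of exactly one of them. Theorem~\ref{pumpkin bread} says that $p_j$ contains no sibling pair, i.e.\ at most one root from each of the $2^n$ pairs; hence $\deg p_1\le 2^n$ and $\deg p_2\le 2^n$, and since the degrees sum to $2^{n+1}$ we conclude $\deg p_1=\deg p_2=2^n$. In particular each sibling pair contributes exactly one root to $p_1$ and one to $p_2$.

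For irreducibility I would use transitivity. Let $E_{n+1}$ be the splitting field of $g^{n+1}(x)$ over $K$ and $G_{n+1}=\mathrm{Gal}(E_{n+1}/K)$; because $g^n(x)$ is irreducible, $G_{n+1}$ acts transitively on $\{\beta_1,\dots,\beta_{2^n}\}$. Given two roots $\alpha,\alpha'$ of $p_1$, lying above $\beta_i$ and $\beta_j$ say, pick $\phi\in G_{n+1}$ with $\phi(\beta_i)=\beta_j$. Then $\phi(\alpha)$ is a root of $p_1$ (as $p_1\in K[x]$) lying above $\beta_j$, and by the previous paragraph the unique such root is $\alpha'$, so $\phi(\alpha)=\alpha'$. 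Thus $G_{n+1}$ acts transitively on the roots of $p_1$, so $p_1$ is irreducible, and symmetrically $p_2$ is irreducible. (One can also argue without Galois theory: a proper factorization $p_1=q_1q_2$ would yield the non-trivial factorization $g^{n+1}=q_1\cdot(q_2p_2)$ in which $q_2p_2$ contains a full sibling pair --- the $p_1$-root in $q_2$ above some $\beta_i$ together with its sibling, which lies in $p_2$ --- contradicting Theorem~\ref{pumpkin bread}.)

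The main thing to be careful about is that separability of $g^{n+1}(x)$ is genuinely needed in two places: to turn ``$p_j$ meets each sibling pair at most once'' into the numerical bound $\deg p_j\le 2^n$, and to make ``the root of $p_1$ above $\beta_j$'' unambiguous in the transitivity step. Beyond that, the corollary is a fairly direct consequence of Theorem~\ref{pumpkin bread} and the transitivity of $G_{n+1}$ on the roots of $g^n(x)$, so I do not anticipate a serious obstacle.
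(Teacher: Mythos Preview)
Your argument is correct and follows essentially the same route as the paper: use Theorem~\ref{pumpkin bread} to bound each factor (or Galois orbit) by $2^n$ roots, then use transitivity of $G_{n+1}$ on $\{\beta_1,\dots,\beta_{2^n}\}$ to force equality and hence irreducibility. Your version is in fact tidier than the paper's in two respects --- you verify separability of $g^{n+1}(x)$ explicitly (the paper uses it tacitly), and your parenthetical Galois-free argument for irreducibility, via $g^{n+1}=q_1\cdot(q_2p_2)$ and a second application of Theorem~\ref{pumpkin bread}, is a nice alternative not in the paper.
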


\begin{proof} Assume $g_{\gamma,m}^n(x)$ is irreducible and $g_{\gamma,m}^{n+1}(x)= p_1(x)p_2(x)$. We know 
$\deg g_{\gamma,m}^n(x)$ $=2^n$ and $\deg g_{\gamma,m}^{n+1}(x)=2^{n+1}$. Let $\{\alpha_1,...,\alpha_{2^n}\}$ be the roots of $p_1(x)$ where $\alpha_j\not=2\gamma-\alpha_i$ for $i\not=j$ and $i,j\in\{1,\ldots,2^n\}$. Without loss of generality, let $\alpha_i=\gamma+\sqrt{\beta_i-m-\gamma}$. Also consider $ G_{n+1}$ as defined in Theorem \ref{pumpkin bread}. Let $\alpha$ be a root of $g_{\gamma,m}^{n+1}(x)$ and $\phi\in G_{n+1}$. If $\phi(\alpha)=\alpha'$, then $\alpha'$ and $2\gamma-\alpha'$ cannot be in the same orbit of $G_{n+1}$ over the roots of $g_{\gamma,m}^{n+1}(x)$ by Theorem \ref{pumpkin bread}. Thus each orbit of $G_{n+1}$ over the roots of $g_{\gamma,m}^{n+1}(x)$ has no more than $2^n$ elements.

But, since $G_{n+1}$ acts transitively on $\{\beta_1,\ldots,\beta_{2^n}\}$, there is only one orbit of $G_{n+1}$ over $\{\beta_1,\ldots,\beta_{2^n}\}$. Let $\beta_i$ be a root of $g_{\gamma,m}^n(x)$ with $1\le i \le 2^n$. Then,
\begin{eqnarray}
\mathrm{orb}_{G_{n+1}}(\beta_i) & = & \{\beta_1,\ldots,\beta_{2^n}\} \nonumber\\
\implies \mathrm{orb}_{G_{n+1}}((\alpha_i-\gamma)^2+\gamma+m) & = & \{\beta_1,\ldots,\beta_{2^n}\}. \nonumber
\end{eqnarray}
Therefore, each orbit of $G_{n+1}$ over the roots of $g_{\gamma,m}^{n+1}(x)$ has at least $2^n$ elements. Hence each orbit of $G_{n+1}$ over the roots of $g_{\gamma,m}^{n+1}(x)$ has exactly $2^n$ elements so $g_{\gamma,m}^{n+1}$ has two irreducible factors, each with degree $2^n$. \end{proof}

\section{Curves and Faltings' Theorem}
We now use Theorem \ref{pumpkin bread} to connect a new factorization of $g_{\gamma,m}^{n+1}(x)$ to $K$-rational points on a certain curve. 

\begin{thm} \label{baked ziti}
If $g_{\gamma,m}^n(x)$ is irreducible and $g_{\gamma,m}^{n+1}(x)$ is reducible for some $n\ge 1$, then there exist $x,y\in K$ with $x=m$ such that $$y^2=t_{n+1}(x),$$ where the polynomials $t_i(x)$ are defined by the recurrence relation $t_1(x)=x+\gamma$ and for $i\ge 2$, $$t_i(x)=(t_{i-1}(x)-\gamma)^2+x+\gamma.$$
\end{thm}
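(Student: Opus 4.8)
The plan is to extract a square root of $t_{n+1}(m)$ from the constant term, expanded about $x=\gamma$, of one of the two factors of $g_{\gamma,m}^{n+1}(x)$, exploiting the root structure forced by Theorem~\ref{pumpkin bread} and Corollary~\ref{pulled pork}. First I would invoke Corollary~\ref{pulled pork}: since $g_{\gamma,m}^{n+1}(x)$ is reducible it admits a nontrivial factorization $g_{\gamma,m}^{n+1}(x)=p_1(x)p_2(x)$, and then $p_1,p_2$ are irreducible of degree $2^n$ and may be taken monic. Since $g_{\gamma,m}^n(x)$ is irreducible it is separable, so it has $2^n$ distinct roots $\beta_1,\dots,\beta_{2^n}$; above each $\beta_i$ lie the two roots $\alpha_i^{\pm}=\gamma\pm\sqrt{\beta_i-m-\gamma}$ of $g_{\gamma,m}^{n+1}(x)$, satisfying $\alpha_i^-=2\gamma-\alpha_i^+$, and these $2^n$ sibling pairs $\{\alpha_i^+,\alpha_i^-\}$ exhaust the $2^{n+1}$ roots. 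Applying Theorem~\ref{pumpkin bread} with $\alpha=\gamma$ shows $\gamma$ is a root of neither $p_1$ nor $p_2$, hence not of $g_{\gamma,m}^{n+1}(x)$, so each sibling pair genuinely has two elements and the roots of $g_{\gamma,m}^{n+1}(x)$ are distinct.

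The key structural step is to determine how the factors split the sibling pairs. Theorem~\ref{pumpkin bread} says $p_1$ contains at most one member of each pair $\{\alpha_i^+,\alpha_i^-\}$, and by the symmetry of its hypothesis in $p_1$ and $p_2$ the same holds for $p_2$. Since $p_1$ and $p_2$ each have $2^n$ roots and there are exactly $2^n$ pairs, each factor contains exactly one root from each pair, and the two factors select opposite members. Thus, writing the roots of $p_1$ as $\epsilon_1,\dots,\epsilon_{2^n}$, the roots of $p_2$ are precisely $2\gamma-\epsilon_1,\dots,2\gamma-\epsilon_{2^n}$.

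Next I would compare constant terms after expanding $p_1$ and $p_2$ in powers of $(x-\gamma)$. These constant terms are $p_1(\gamma)=\prod_{i}(\gamma-\epsilon_i)$ and $p_2(\gamma)=\prod_{i}\bigl(\gamma-(2\gamma-\epsilon_i)\bigr)=\prod_{i}(\epsilon_i-\gamma)=(-1)^{2^n}\,p_1(\gamma)=p_1(\gamma)$, using that $2^n$ is even because $n\ge 1$; this equality is the identity $b_0=c_0$ invoked in the proof of Theorem~\ref{macNcheese} for the case $n=1$. Evaluating $g_{\gamma,m}^{n+1}(x)=p_1(x)p_2(x)$ at $x=\gamma$ then gives $g_{\gamma,m}^{n+1}(\gamma)=p_1(\gamma)p_2(\gamma)=p_1(\gamma)^2$, with $p_1(\gamma)\in K$ since $p_1\in K[x]$ and $\gamma\in K$. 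Finally a routine induction on $i$, with base case $t_1(m)=m+\gamma=g_{\gamma,m}(\gamma)$ and inductive step noting that $t_i(x)=(t_{i-1}(x)-\gamma)^2+x+\gamma$ evaluated at $x=m$ mirrors one application of $g_{\gamma,m}$, shows $t_i(m)=g_{\gamma,m}^i(\gamma)$ for all $i\ge 1$. Combining these, $t_{n+1}(m)=g_{\gamma,m}^{n+1}(\gamma)=p_1(\gamma)^2$, so $x=m$ and $y=p_1(\gamma)$ furnish the claimed $K$-point on $y^2=t_{n+1}(x)$.

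The one genuinely delicate point is the counting argument identifying how $p_1$ and $p_2$ partition the sibling pairs: it requires Theorem~\ref{pumpkin bread} in the form applying to both factors, together with the exact degree data from Corollary~\ref{pulled pork} and the fact that there are exactly $2^n$ pairs. Everything after that is bookkeeping, with the parity identity $(-1)^{2^n}=1$, valid precisely because $n\ge 1$, doing the real work in the passage from two factors to a single square.
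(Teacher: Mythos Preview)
Your argument is correct and follows essentially the same route as the paper: use Theorem~\ref{pumpkin bread} to see that the roots of $p_2$ are exactly $2\gamma-\epsilon_i$ for $\epsilon_i$ the roots of $p_1$, conclude $p_1(\gamma)=p_2(\gamma)$ via the parity $(-1)^{2^n}=1$, and identify $g_{\gamma,m}^{n+1}(\gamma)$ with $t_{n+1}(m)$. If anything you are more careful than the paper, explicitly invoking Corollary~\ref{pulled pork} for the degree count and ruling out $\gamma$ as a root to ensure the sibling pairs are genuine pairs.
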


\begin{remark}
Note that $t_i(x) = (g_{\gamma,m}^i(\gamma))|_{m = x}$, as will be shown below (or can be easily seen by induction). 
\end{remark}

\begin{proof} Assume $g_{\gamma,m}$ is irreducible and $g_{\gamma,m}^{n+1}(x)=p_1(x)p_2(x)$ for some $p_1(x), p_2(x) \in K[x]$ of positive degree. By Theorem \ref{pumpkin bread}, if $\alpha_i$ is a root of $p_1(x)$ for any $i \in \{1,\cdots ,2^n\}$, then $2\gamma-\alpha_i$ is not a root of $p_1(x)$, which implies that $2\gamma-\alpha_i$ is a root of $p_2(x)$. Without loss of generality, let $\alpha_1,\ldots,\alpha_{2^n}$ be the roots of $p_1(x)$ with $\alpha_j\not=2\gamma-\alpha_i$ and $\alpha_i=\gamma+\sqrt{\beta_i-\gamma-m}$. Then,
\begin{eqnarray}
p_1(x) & = & (x-\alpha_1)(x-\alpha_2) \cdots (x-\alpha_{2^n}) \text{ and} \nonumber\\
p_2(x) & = & (x-(2\gamma-\alpha_1))(x-(2\gamma-\alpha_2)) \cdots (x-(2\gamma-\alpha_{2^n})) \nonumber\\
& = & (x-2\gamma+\alpha_1)(x-2\gamma+\alpha_2) \cdots (x-2\gamma+\alpha_{2^n}). \nonumber
\end{eqnarray}

\noindent So we have:
\begin{eqnarray}
p_1(\gamma) & = & (\gamma-\alpha_1)(\gamma-\alpha_2) \cdots (2\gamma-\alpha_{2^n}) \text{, and} \nonumber\\ 
p_2(\gamma) & = & (-\gamma+\alpha_1)(-\gamma+\alpha_2) \cdots (-\gamma+\alpha_{2^n}) \nonumber\\
& = & (-1)^{2^n} (\gamma-\alpha_1)(\gamma-\alpha_2) \cdots (\gamma-\alpha_{2^n}), \label{consteq}
\end{eqnarray}

\noindent and therefore $p_1(\gamma)=p_2(\gamma)$. Set $y=p_1(\gamma)=p_2(\gamma)$, so $g_{\gamma,m}^{n+1}(\gamma)=y^2$. We have
$$g_{\gamma,m}^{n+1}(x)= g(g_{\gamma,m}^n(x))=(g_{\gamma,m}^n(x)-\gamma)^2+m+\gamma,$$
and hence
$$g_{\gamma,m}^{n+1}(\gamma)= g(g_{\gamma,m}^n(\gamma))=(g_{\gamma,m}^n(\gamma)-\gamma)^2+m+\gamma.$$
Moreover, $g_{\gamma,m}(\gamma)=m+\gamma$ and  $g_{\gamma,m}^i(\gamma)$ satisfies the same recurrence relation as $t_i(x)$, with $x$ replaced by $m$.
\end{proof}

The polynomials $t_i(x)$ play a critical role in our argument. The first few are:
\begin{equation}
t_1(x) = x + \gamma \qquad t_2(x) = x^2 + x + \gamma \qquad t_3(x) = x^4 + 2x^3 + x^2 + x + \gamma 
\end{equation}
$$t_4(x) = x^8 + 4x^7 + 6x^6 + 6x^5 + 5x^4 + 2x^3 + x^2 + x + \gamma $$
Equations of the form $y^2=t_i(x)$ may be interpreted geometrically as plane curves. A plane curve defined over a field $F$ is the set of solutions $(x,y)\in F\times F$ of an equation of the form $h(x,y)=0$, where $h(x,y)\in F[x,y]$. If $K$ is a subfield of $F$, a $K$-rational point on the curve is one whose coordinates lie in $K$. For instance, $(1,-1)$ is a $\Q$-rational point on the curve $y^2 = x^3 + x - 1$, while $(-1, \sqrt{-3})$ is not (though it is $K$-rational for $K = \Q(\sqrt{-3})$). 

The genus of a plane curve is a measure of its geometric complexity, and for curves of the form $y^2 = r(x)$, which is the case of interest to us in light of Theorem \ref{baked ziti}, there is a convenient way to calculate it -- at least, when the roots of $r(x)$ in the algebraic closure of $K$ are distinct. 
\begin{thm} \label{panera} \cite{Goldschmidt} Consider the curve $C:  y^2=r(x)$. If $r(x)$ is separable and of degree $d$, then the genus $g$ of $C$ is given by  
\begin{displaymath}
   g = \left\{
     \begin{array}{lr}
       (d-1)/2  & \textrm{ for } d \textrm{ odd},\\
       (d-2)/2  & \textrm{ for } d \textrm{ even}.\\
    \end{array}
   \right.
\end{displaymath}
\end{thm}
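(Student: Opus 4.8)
The plan is to realize the smooth projective model $C$ of $y^2 = r(x)$ as a double cover of $\mathbb{P}^1$ and apply the Riemann--Hurwitz formula; since the genus is a geometric invariant we may and do work over $\overline{K}$, where $\operatorname{char} = 0$ so that $2$ is invertible. First I would verify that the affine curve $y^2 = r(x)$ is smooth: a singular point would require $y = 0$, $r'(x) = 0$, and $r(x) = 0$ at once, which is impossible since a separable $r$ shares no root with $r'$. Let $C$ be the unique smooth projective curve containing this affine curve as a dense open subset.

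The coordinate function $x$ defines a degree-$2$ morphism $\pi : C \to \mathbb{P}^1$, because $\overline{K}(C) = \overline{K}(x)(y)$ with $y^2 = r(x)$. Next I would locate the ramification points of $\pi$. Over the affine line, a point $(x_0, y_0)$ is a ramification point exactly when $y_0 = 0$, i.e. when $x_0$ is a root of $r$; separability of $r$ gives precisely $d$ such points. Over $x = \infty$ I would work in the chart $x = 1/u$ and set $y = v/u^{\lceil d/2\rceil}$, which recasts the equation as $v^2 = u^{2\lceil d/2\rceil} r(1/u)$; the right-hand side is a polynomial in $u$ with nonzero constant term when $d$ is even and with a simple zero at $u = 0$ when $d$ is odd. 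Hence the fiber of $C$ over $x = \infty$ consists of two unramified points when $d$ is even and of a single ramified point when $d$ is odd. In all cases every ramification point of $\pi$ has ramification index $2$, and the number $R$ of such points is $d$ for $d$ even and $d + 1$ for $d$ odd.

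Finally, Riemann--Hurwitz applied to $\pi$, with $g = g(C)$ and $g(\mathbb{P}^1) = 0$, gives
\[
2g - 2 \;=\; 2\bigl(2\cdot 0 - 2\bigr) + \sum_{P \in C}\bigl(e_P - 1\bigr) \;=\; -4 + R .
\]
For $d$ even this reads $2g - 2 = d - 4$, so $g = (d-2)/2$; for $d$ odd it reads $2g - 2 = d - 3$, so $g = (d-1)/2$. This is exactly the asserted formula.

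The step that demands the most care is the analysis of the fiber over $x = \infty$ on the smooth model: one must check that the substitution above genuinely describes the normalization near infinity, that the two points obtained in the even case are distinct and unramified, and that the single point in the odd case is ramified of index $2$. This local computation is the only place where the parity of $d$ enters the argument; everything else is uniform in $d$.
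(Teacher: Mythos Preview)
Your argument via Riemann--Hurwitz is correct and is the standard proof of this classical fact. Note, however, that the paper does not supply its own proof of this theorem: it is stated with a citation to Goldschmidt's \emph{Algebraic Functions and Projective Curves} and used as a black box. So there is no ``paper's proof'' to compare against; you have simply filled in what the authors chose to quote from the literature, and your approach is essentially the one found in any standard reference (including Goldschmidt).
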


Assume that $r(x)$ is separable. A curve of the form $y^2 = r(x)$ of genus at least two is called a \textit{hyperelliptic curve}, while when such a curve has genus one it is known as a \textit{elliptic curve}.  The reason we care about the genus of a curve is that Faltings' Theorem famously connects it to the number of $K$-rational points on the curve:
\begin{thm}\textbf{(Faltings' Theorem)} \cite{Hindry} Let $K$ be a number field, and let $C$ be a curve defined over $K$ of genus $g\ge2$. Then the set of $K$-rational points on  $C$ is finite.  \end{thm}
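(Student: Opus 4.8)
\noindent The plan is to prove what is classically the Mordell conjecture. I note immediately that this statement lies entirely outside the elementary range of the rest of the paper: every known proof is a major undertaking, so I will only sketch a strategy. I would follow the Diophantine-approximation route of Vojta, in the streamlined form due to Faltings and to Bombieri, rather than Faltings' original argument (which deduces Mordell from the Shafarevich conjecture via Parshin's trick, and establishes Shafarevich using the Tate conjecture for abelian varieties, semistable reduction, and the finiteness of isogeny classes controlled by heights on a moduli space).

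First I would pass to the Jacobian: fix a rational base point (enlarging $K$ harmlessly if necessary, since finiteness of $C(L)$ for a finite extension $L/K$ implies finiteness of $C(K)$), embed $C\hookrightarrow J:=\mathrm{Jac}(C)$, and invoke the Mordell--Weil theorem to write $J(K)$ as a finitely generated abelian group of some rank $r$. The N\'eron--Tate canonical height $\hat h$ attached to a symmetric theta divisor makes $J(K)\otimes\R\cong\R^r$ into a Euclidean space. Since the points of $C(K)$ have bounded naive height in any fixed projective embedding, finiteness will follow once one shows there are only finitely many rational points of large canonical height.

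Next comes Mumford's gap principle: using the difference map $C\times C\to J$ and the positivity of the theta divisor, two rational points of $C$ whose images in $\R^r$ point in nearly the same direction and have comparable height must in fact have canonical heights differing by a definite multiplicative factor. Covering the unit sphere of $\R^r$ by finitely many narrow cones, this forces the number of rational points of canonical height at most $B$ to grow no faster than $O(\log B)$.

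The crux --- and the step I expect to be by far the main obstacle --- is Vojta's inequality: for rational points $P,Q\in C(K)$ with $\hat h(Q)$ sufficiently large compared to $\hat h(P)$ and with the angle between $j(P)$ and $j(Q)$ in $\R^r$ sufficiently small, one derives a contradiction. This is proved by constructing on the arithmetic surface $C\times C$ a section of a carefully chosen metrized line bundle (built from the diagonal, the two fibers, and a correction term) that is small in height yet vanishes to high order at $(P,Q)$ --- produced via Siegel's lemma together with an arithmetic Riemann--Roch / Hilbert--Samuel estimate --- and then bounding the order of vanishing from above using a product theorem and Roth-type local estimates; Faltings' refinement replaces the explicit surface bookkeeping by an arithmetic Hodge index theorem (Faltings--Hriljac). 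Combining the gap principle with Vojta's inequality eliminates all rational points of sufficiently large height, and the finitely many remaining points of bounded height complete the argument, giving $C(K)$ finite.
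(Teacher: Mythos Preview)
The paper does not prove this statement at all; it merely records Faltings' Theorem as a quoted result with a citation to Hindry--Silverman's \emph{Diophantine Geometry}, and then applies it as a black box to the curves $y^2=t_{n+1}(x)$ and $C_\gamma$. So there is no proof in the paper to compare your proposal against.

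Your sketch is a fair high-level outline of the Vojta--Bombieri approach (Mordell--Weil, Mumford's gap principle, and Vojta's inequality via an auxiliary section on $C\times C$), and you are right to flag that any complete argument is far beyond the scope of this paper. For the purposes of matching what the paper actually does, however, the correct ``proof'' here is simply to cite the literature, as the authors do.
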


Suppose for a moment that all of the polynomials $t_i(x)$ in Theorem \ref{baked ziti} are separable. Clearly $\deg t_i(x) = 2^{i-1}$. By Theorem \ref{panera}, the genus $g_i$ of the curve $y^2 = t_i(x)$ then satisfies 
\begin{equation} \label{tigenus}
   g_i = \left\{
     \begin{array}{lr}
       0  & \textrm{for} \hspace{3mm} i=1,\\
       2^{i-2}-1 & \textrm{for} \hspace{3mm} i\ge2.\\
     \end{array}
   \right.
\end{equation}
Therefore by Faltings' Theorem, the curve $y^2 = t_{n+1}(x)$ has only finitely many $K$-rational points for $n \ge 3$. In particular, there are only finitely many $x \in K$ such that $(x, y)$ is a $K$-rational point on $y^2 = t_{n+1}(x)$. Thus, by Theorem \ref{baked ziti}, when $n\ge3$ there are only finitely many $m \in K$ with $g_{\gamma,m}^n(x)$ irreducible and $g_{\gamma,m}^{n+1}(x)$ reducible over $K$.

Hence the lone remaining obstacle to proving part (1) of Theorem \ref{smores bites} is to establish that the $t_i(x)$ in Theorem \ref{baked ziti} are separable. Note that this is not true for all $\gamma \in K$. Indeed, if $\gamma = 1/4$, then $t_2(x) = (x + 1/2)^2$. (Note that $\gamma = 1/4$ also distinguished itself in 
Theorem \ref{macNcheese}). The set $S : = \{\gamma \in \overline{\Q} : \text{$t_i(x)$ is separable for all $i \geq 1$}\}$ is the same as the set of $a \in \overline{\Q}$ such that the pre-image curves $Y^{\textrm{pre}}(N,-a)_{N \geq 1}$ defined in \cite{preimagecurve} are all non-singular. In general, the set of $\overline{\Q} \setminus S$ is poorly understood. One result \cite[Proposition 4.8]{preimagecurve} gives a criterion for membership in $S$. Here we give an improvement on that result.

\begin{prop} \label{twizzler} Let $K$ be a number field with ring of integers $\mathcal{O}_K$, and let $t_i(x)=(t_{i-1}(x)-\gamma)^2+x+\gamma$,  as in Theorem \ref{baked ziti}.
Suppose there exists a prime $\p$ of $\mathcal{O}_K$ with 
$v_\p(2) = e \geq 1$ and $v_\p(\gamma) = s$ with $s \neq -e2^j$ for all $j \geq 1$.
Then $t_i(x)$ is separable over $K$ for all $i\ge 1$. 
\end{prop}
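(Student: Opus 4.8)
The plan is to work $\p$-adically and track the valuations of the coefficients of $t_i(x)$ as $i$ grows, aiming to show that modulo a suitable power of $\p$ the polynomial $t_i(x)$ reduces to something that is visibly separable (for instance, an Eisenstein-like polynomial or one whose reduction has distinct roots over the residue field). Separability of $t_i(x)$ over $K$ is equivalent to $t_i$ and $t_i'$ having no common root in $\overline{K}$, i.e. to $\gcd(t_i, t_i') = 1$; since this gcd can be computed over $\overline{K}$, it suffices to produce one prime $\p$ at which $t_i \bmod \p$ (after clearing denominators appropriately, or working in $\mathcal{O}_{K,\p}$) is separable, because separability of the reduction forces separability of $t_i$ itself.

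First I would set up the recursion for the $\p$-adic valuations of the coefficients of $t_i(x) = (t_{i-1}(x) - \gamma)^2 + x + \gamma$. Write $v = v_\p$, $e = v(2)$, $s = v(\gamma)$. The leading term of $t_i$ is $x^{2^{i-1}}$ with unit coefficient, and the constant term is $t_i(0)$, which satisfies $c_i := t_i(0) = (c_{i-1} - \gamma)^2 + \gamma$ with $c_1 = \gamma$. The key elementary computation is to determine $v(c_i)$: one finds $v(c_1) = s$, and then $v(c_i) = \min(2 v(c_{i-1} - \gamma), s)$, where $v(c_{i-1}-\gamma)$ in turn depends on whether $v(c_{i-1}) = s$ with cancellation or not. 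Carrying this out, the hypothesis $s \neq -e2^j$ for all $j \ge 1$ is exactly what is needed to prevent a ``resonance'' in which the doubling $2v(c_{i-1}-\gamma)$ lands on $s$ and causes the kind of coincidence that produces repeated roots; it guarantees that $v(c_i)$ stabilizes (or follows a predictable pattern) without the degeneracy seen at $\gamma = 1/4$, where $e = 2$, $s = -2 = -e\cdot 2^0$... and more relevantly the failure cases correspond to $s$ hitting $-e2^j$. I would then show that with this valuation control, $t_i(x)$ is congruent mod an appropriate power of $\p$ to a polynomial whose derivative is coprime to it — the cleanest route is likely to exhibit $t_i(x)$ (up to scaling $x$ by a uniformizer to move to an integral model) as Eisenstein at $\p$, hence irreducible, hence separable; alternatively, to show $t_i \bmod \p$ has a nonzero discriminant by induction, using that $t_i = (t_{i-1} - \gamma)^2 + (x + \gamma)$ and the chain rule $t_i' = 2(t_{i-1}-\gamma)t_{i-1}' + 1$.

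The induction on separability itself would run as follows: suppose $t_{i-1}$ is separable; a common root $\theta$ of $t_i$ and $t_i'$ satisfies $(t_{i-1}(\theta) - \gamma)^2 = -(\theta + \gamma)$ and $2(t_{i-1}(\theta)-\gamma)t_{i-1}'(\theta) = -1$. Eliminating, $t_{i-1}'(\theta) \neq 0$ and $t_{i-1}(\theta) - \gamma = -1/(2 t_{i-1}'(\theta))$, so $\theta$ satisfies $1/(4 t_{i-1}'(\theta)^2) = -(\theta+\gamma)$, i.e. $4(\theta+\gamma) t_{i-1}'(\theta)^2 = -1$. Now I would use the $\p$-adic valuation estimates from the previous paragraph — applied to $\theta$, which must be $\p$-integral or have controlled valuation because $t_{i-1}(\theta) = \gamma - 1/(2t_{i-1}'(\theta))$ ties $\theta$ to $\gamma$ and to $2$ — to derive a contradiction with the hypothesis on $s$ and $e$. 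The main obstacle, and the part requiring genuine care, is exactly this valuation bookkeeping: getting a clean, closed-form description of $v_\p(\theta)$ and $v_\p(t_{i-1}'(\theta))$ forced by the two equations, and checking that the only way the arithmetic can be consistent is for $s$ to equal $-e 2^j$ for some $j \ge 1$ — that is, proving that the excluded arithmetic progression of valuations is not merely sufficient but precisely the obstruction. I expect this to reduce to a short lemma about the orbit of $\gamma$ under $x \mapsto (x-\gamma)^2 + \gamma$ in $\mathcal{O}_{K,\p}$ and its valuations, after which the contradiction is immediate; everything else (the equivalence of separability with the gcd condition, reduction mod $\p$, the chain-rule identity for $t_i'$) is routine.
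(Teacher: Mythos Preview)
Your overall strategy of working $\p$-adically is right, and your chain-rule identity $t_i' = 2(t_{i-1}-\gamma)t_{i-1}' + 1$ together with the derived relation $4(\theta+\gamma)\,t_{i-1}'(\theta)^2 = -1$ for a hypothetical common root $\theta$ is correct and genuinely useful. However, several of the specific routes you sketch do not go through, and the one that does is left at exactly the point where the real work lies.

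First, your analysis of the constant terms $c_i = t_i(0)$ collapses: since $c_1 = \gamma$ and $c_i = (c_{i-1}-\gamma)^2 + \gamma$, one has $c_i = \gamma$ for \emph{all} $i$, so there is no nontrivial recursion to track and no ``resonance'' phenomenon to detect there. For the same reason, your proposed ``short lemma about the orbit of $\gamma$ under $x\mapsto (x-\gamma)^2+\gamma$'' is vacuous: $\gamma$ is a fixed point of that map. Second, the reduction-mod-$\p$ idea (showing $\overline{t_i}$ is separable over the residue field) only works when $s = v_\p(\gamma)\ge 0$; in that case all coefficients are $\p$-integral and $t_i' \equiv 1 \pmod{\p}$, which is fine, but when $s<0$ the polynomial is not $\p$-integral and no Eisenstein-type rescaling will make it so. The case $s<0$ is the entire content of the hypothesis $s\neq -e2^j$, so this is precisely where you need an argument and do not have one.

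The paper's proof supplies the missing ingredient via Newton polygons. One observes that $t_i$ is monic of degree $2^{i-1}$ with constant term $\gamma$ and all other coefficients in $\Z$, so when $s<0$ its Newton polygon is a single segment and every root $\theta$ satisfies $v_\p(\theta) = -s/2^{i-1}>0$. Independently, an induction using your chain-rule identity determines the Newton polygon of $t_i'$ completely: for each $0\le r\le i-2$ there are exactly $2^r$ roots of $t_i'$ with valuation $-e/2^r$. Comparing, a common root forces $-s/2^{i-1} = -e/2^r$, i.e.\ $s = -e\,2^{i-1-r}$ with $i-1-r\ge 1$. Note that your derived equation can in fact be completed along similar lines: once you know $v_\p(\theta)>0$ from the Newton polygon of $t_i$, the fact that $t_{i-1}'$ has constant term $1$ and $\p$-integral higher coefficients gives $v_\p(t_{i-1}'(\theta))=0$, and then $2e + v_\p(\theta+\gamma) + 0 = 0$ with $v_\p(\theta+\gamma)=s$ yields $s=-2e$ directly. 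So your approach is salvageable and arguably sharper, but it still requires the Newton-polygon input for $t_i$ that you did not supply, and the parts of your sketch meant to replace it (the $c_i$ recursion, the orbit lemma) do not work.
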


\begin{remark}
When $K = \Q$, Proposition \ref{twizzler} says that if $v_2(\gamma) \neq -2^j$ for all $j \geq 1$, then $t_i(x)$ is separable for all $i \geq 1$.
\end{remark}

\begin{proof}
It suffices to establish that $t_i(x)$ and $t_i'(x)$ have no common roots in 
$\overline{K}$, which we do through the use of Newton polygons with respect to the valuation $v_\p$ (which we often abbreviate NP). We assume the reader is familiar with the relationship between slopes of the Newton polygon of a polynomial and the $\p$-adic valuation of the polynomial's roots (see e.g. \cite[Theorem 5.11]{ADS}).
We first claim that for each $r$ with $0 \leq r \leq i-2$, $t_i'(x)$ has $2^r$ roots in $\overline{K}$ with $\p$-adic valuation $-e/2^{r}$. The statement is trivial for $i = 2$, so we assume inductively that it holds for given $i \geq 3$, and we consider the NP of $t_i'(x)$ with respect to the $\p$-adic valuation. By the chain rule,
\begin{equation*}
t'_{i+1}(x)=2(t_{i}(x)-\gamma)t'_{i}(x)+1.
\end{equation*}
Observe that $t_{i}(x) - \gamma$ is monic, has integer coefficients, and has linear coefficient $1$ (and constant term $0$). Thus its NP consists of a single horizontal line segment from $(1,0)$ to $(2^{i-1},0)$. From our inductive hypothesis, it follows that the NP of $2(t_{i}(x)-\gamma)t'_{i}(x)$ consists of a horizontal line segment from 
$(1,e)$ to $(2^{i-1},e)$, followed by a sequence of segments of slope $e/2^{i-2}, e/2^{i-3}, \ldots, e$ and respective lengths $2^{i-2}, 2^{i-3}, \ldots, 1$.  Hence the NP of $2(t_{i}(x)-\gamma)t'_{i}(x) + 1$ consists of a line segment from $(0,1)$ to $(2^{i-1},e)$, having slope $e/2^{i-1}$, and otherwise is identical to the NP of $2(t_{i}(x)-\gamma)t'_{i}(x)$, since $e/2^{i-1} < e/2^c$ for $0 \leq c \leq i-2$. This proves the claim. 

For each $i \ge 1$, $t_i(x)$ is a monic polynomial with degree $2^{i-1}$ and constant term $\gamma$, whose non-constant coefficients are all integers. If $v_\p(\gamma) \geq 0$, then the NP of $t_i(x)$ consists of nonpositive slopes, and hence all its roots have nonnegative $\p$-adic valuation, and therefore cannot coincide with roots of $t_i'(x)$ by the above claim. 
If $v_\p(\gamma) = s < 0$, the NP for $t_i(x)$ consists of a single line segment from $(0, s)$ to $(2^{i-1},0)$, with length $2^{i-1}$ and slope $-s/2^{i-1}$. Hence if $t_i(x)$ and $t_i'(x)$ have a root in common, then by the above claim, $-s/2^{i-1} = e/2^r$ with $0 \leq r \leq i-2$. But this holds if and only if $s = -e2^{i-1-r}$, and since $i-1-r \geq 1$, the proof is complete.   
\end{proof}



\begin{remark}
To show that the genus of the curve $y^2 = t_i(x)$ is at least two, we can get by with a much weaker statement than Proposition \ref{twizzler}. Indeed, the genus of $y^2 = t_i(x)$ depends on the degree of $t_i(x)/f(x)$, where $f(x)$ is the largest square polynomial dividing $t_i(x)$. All we need to show is that the degree of $t_i(x)/f(x)$ is at least five, for each $i \geq 4$. It is reasonable to believe that this holds for all $\gamma \in \overline{\Q}$. 
\end{remark}

\section{The $n=2$ Case} \label{twocase}

Consider now the case where $n=2$. From \eqref{tigenus}, we know that when $t_3(x)$ is separable, $g_3=1$, and so $y^2 = t_3(x)$ is an elliptic curve. (When $t_3(x)$ is not separable, $y^2 = t_3(x)$ gives a curve of genus $0$.) Thus we cannot directly apply Faltings' Theorem, and we must use a different approach to determine the set $S(2, 2, \gamma)$ of $m\in K$ such that $g_{\gamma,m}^2(x)$ is irreducible and $g_{\gamma,m}^3(x)$ is reducible over $K$.

Now for some number fields $K$ and some $\gamma \in K$, it may still be the case that $y^2 = t_3(x)$ has only finitely many $K$-rational points, proving the finiteness of $S(2,2,\gamma)$ over $K$. This is the case for $\gamma = 0$ and $K = \Q$, as we now show:
\begin{prop} \label{deep dish pizza} Let $\gamma = 0$ and $C_3$ be the curve given by $y^2=t_3(x)=x^4-2x^3+x^2-x$. The only $\mathbb{Q}$-rational points on $C_3$ are $(0,0)$ and the point at infinity. In particular, there are no $m \in \Q$ such that $x^2 + m$ has a newly reducible third iterate. \end{prop}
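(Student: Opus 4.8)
The plan is to recognize $C_3$ as the smooth model of an elliptic curve over $\mathbb{Q}$ and to compute its Mordell--Weil group. Since $\gamma=0$ satisfies the hypothesis of Proposition \ref{twizzler} (one has $v_\p(\gamma)=\infty$, which is never of the form $-e2^j$), the polynomial $t_3(x)$ is separable, so by \eqref{tigenus} the curve $C_3:\ y^2=t_3(x)=x^4-2x^3+x^2-x$ has genus one, and it carries the $\mathbb{Q}$-rational point $(0,0)$ because $t_3(0)=\gamma=0$. Writing $t_3(x)=x\,h(x)$ with $h(x)=x^3-2x^2+x-1$ (irreducible over $\mathbb{Q}$, as it has no rational root), the substitution $x=1/u$, $y=v/u^2$ followed by $u\mapsto -u$ carries $C_3$ birationally onto the Weierstrass curve
\[
E:\ v^2=u^3+u^2+2u+1, \qquad u=-1/x,\quad v=y/x^2.
\]
Under this map the point $(0,0)$ of $C_3$ goes to the origin $\mathcal O$ of $E$, while the point at infinity of $C_3$ (where $y/x^2\to\pm1$) corresponds to the two points $(0,1)$ and $(0,-1)$ of $E$. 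Hence it suffices to prove that $E(\mathbb{Q})=\{\mathcal O,(0,1),(0,-1)\}$.

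Next I would determine $E(\mathbb{Q})$ exactly. First, the tangent to $E$ at $(0,1)$ is the line $v=u+1$, which meets $E$ only at $(0,1)$, with multiplicity three; therefore $(0,1)$ is a flex and $3\cdot(0,1)=\mathcal O$, so $\mathbb{Z}/3\mathbb{Z}\hookrightarrow E(\mathbb{Q})_{\mathrm{tors}}$. For the reverse inclusion, note that $E$ has good reduction at $3$ (its conductor is $92=2^2\cdot 23$, as one checks from $\Delta_E=-2^4\cdot 23$ via Tate's algorithm), and a direct count gives $|E(\mathbb{F}_3)|=3$; since reduction modulo an odd prime of good reduction is injective on torsion, $E(\mathbb{Q})_{\mathrm{tors}}=\mathbb{Z}/3\mathbb{Z}$. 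It then remains to show $E$ has Mordell--Weil rank $0$: this can be read off from Cremona's tables (it is the conductor-$92$ curve possessing a rational $3$-isogeny), or proved directly by a descent — a $3$-isogeny descent is natural here in view of the rational point $(0,1)$ of order $3$, and it reduces to checking that two explicit Selmer groups, computed from the arithmetic of $\mathbb{Q}(\sqrt{-3})$ and the bad primes $2,23$, are trivial. Granting this, $E(\mathbb{Q})=E(\mathbb{Q})_{\mathrm{tors}}$ has exactly three elements, and transporting back along the birational map shows that the only $\mathbb{Q}$-rational points of $C_3$ are $(0,0)$ and the point at infinity.

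Finally I would deduce the dynamical statement. If $g_{0,m}(x)=x^2+m$ had a newly reducible third iterate over $\mathbb{Q}$, then $g_{0,m}^2(x)$ would be irreducible while $g_{0,m}^3(x)$ is reducible, so Theorem \ref{baked ziti} (with $n=2$ and $\gamma=0$) would furnish a $\mathbb{Q}$-rational point on $C_3$ whose first coordinate equals $m$. The only such point with finite coordinates is $(0,0)$, forcing $m=0$; but $g_{0,0}(x)=x^2$ is already reducible, so $g_{0,0}^2(x)$ is not irreducible --- a contradiction. Hence no such $m$ exists.

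The step I expect to be the genuine obstacle is establishing that $E$ has rank $0$. The change of variables, the verification that $(0,1)$ has order $3$, the bound on torsion via reduction at $3$, and the final dynamical argument are all routine; by contrast the rank-$0$ assertion requires an honest descent computation (or appeal to a database), and it is the only ingredient that does not reduce to elementary algebra.
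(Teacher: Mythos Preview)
Your proposal is correct and follows essentially the same route as the paper: both pass from $C_3$ to the Weierstrass model $v^2=u^3+u^2+2u+1$ via the substitution $u=-1/x$, $v=y/x^2$, identify the curve as conductor $92$ with rank $0$ and torsion $\Z/3\Z$ (the paper simply cites Cremona's table entry 92A1, whereas you add the explicit flex verification for $(0,1)$ and the reduction-mod-$3$ bound on torsion), and then transport the three points $\{\mathcal{O},(0,\pm1)\}$ back to $(0,0)$ and infinity on $C_3$. Your final paragraph deducing the dynamical statement from Theorem~\ref{baked ziti} spells out what the paper leaves implicit.
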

\begin{proof}
Let $y=u/v^2$ and $x=-1/v$ define a birational rational map $\phi$ from $C_3': u^2=v^3+v^2+2v+1$ to $C_3$. We compute the conductor of the elliptic curve $C_3'$ to be 92, and locate it as curve 92A1 in Cremona's tables \cite{Cremona}. From the tables, it has rank zero over $\Q$ and torsion subgroup of order 3. Hence the obvious points $(0, \pm 1)$ together with the point at infinity give all $\Q$-rational points on $C_3'$. 
However, if $(x,y)$ is an affine rational point on $C_3$ with $x\not=0$, then $\phi^{-1}(x,y)$ is an affine rational point $(v,u)$ on $C_3'$ with $v\not=0$. But there are no such points.
\end{proof}

The strategy of Proposition \ref{deep dish pizza}, however, won't even work for all number fields $K$ in the case $\gamma = 0$. Indeed, let $K=\mathbb{Q}(i)$ and let $\phi$ be the same transformation as in Proposition \ref{deep dish pizza}. One can check that $(-1, i)$ is a non-torsion point of $C_3'$ in many ways. One of the more interesting, if not the simplest computationally, is to show that $(-1,i)$ has positive canonical height. In \cite{heightdifference}, Silverman gives upper and lower bounds for the difference between the canonical height $\hat{h}(P)$ and the Weil height $h(P)$ of a $K$-rational point $P$ on an elliptic curve, computed in terms of the discriminant and $j$-invariant of the curve.  For $C_3^\prime$, we have $-1.5484 \leq \hat{h}(P) - h(P) \leq 1.4577.$ In particular,  $\hat{h}(P) \geq h(P) - 1.5484$, so $h(P) > 1.5484$ would imply that $P$ is a non-torsion point.  Using MAGMA \cite{magma}, we find that although $h(P) = 0$ for $P = (-1,i)$ on $C_3'$, we have $h([2]P) = 1.6094$.  Thus $\hat{h}(P) = \frac{1}{4}\hat{h}([2]P) > 0$, using algebraic properties of canonical height.   

Since $(-1,i)$ is a non-torsion point, our curve $C_3$ has infinitely many $K$-rational points. However, when we check some corresponding $x$-values on $C_3$ as our choices for $m$ in $x^2 + m$, we don't find a newly reducible third iterate over $\mathbb{Q}(i)$. Thus we must adopt a different approach to have any hope of proving the $n=2$ case of Theorem $\ref{smores bites}$, even for $\gamma = 0$.

Let $K$ be a number field and $\gamma \in K$. Suppose that $g_{\gamma,m}^3(x)$ is newly reducible, so that by Corollary \ref{pulled pork}, $g_{\gamma,m}^3(x)=p_1(x)p_2(x)$ for irreducible polynomials $p_1(x), p_2(x) \in K[x]$ with $\deg p_1(x)=\deg p_2(x)=4$. Put $$p_1(x)=(x-\gamma)^4+a_3(x-\gamma)^3+a_2(x-\gamma)^2+a_1(x-\gamma)+a_0$$ and $$p_2(x)=(x-\gamma)^4+b_3(x-\gamma)^3+b_2(x-\gamma)^2+b_1(x-\gamma)+b_0$$  with $a_i, b_i \in K$. We also have
\begin{eqnarray}
g_{\gamma,m}^3(x)& = & (x-\gamma)^8+4m(x-\gamma)^6+(6m^2+2m)(x-\gamma)^4 \nonumber\\
& & +(4m^3+4m^2)(x-\gamma)^2+m^4+2m^3+m^2+m+\gamma. \nonumber
\end{eqnarray}
Multiplying $p_1(x)$ and $p_2(x)$ together, setting this product equal to $g_{\gamma,m}^3(x)$ and comparing coefficients we obtain a system of eight equations. By simplifying this system using Theorem \ref{baked ziti}, and noting that $a_0 \neq 0$ by the irreducibility of $p_1(x)$, we get two cases:

\vspace{5mm}

\noindent $\textbf{Case I:} \; a_1\not=0$, which implies $b_1=-a_1, b_2=a_2$:
\begin{enumerate}
\item $2a_2-a_3^2-4m=0$
\item $2a_0+a_2^2-2a_1a_3-6m^2-2m=0$
\item $2a_2a_0-a_1^2-4m^3-4m^2=0$
\item $a_0^2-m^4-2m^2-m^2-m-\gamma=0$
\end{enumerate}

\noindent $\textbf{Case II :} \; a_1=b_1=0$: 
\begin{enumerate}
\item $b_2-a_3^2+a_2-4m=0$
\item $(b_2-a_2)a_3=0$
\item $2a_0+a_2b_2-6m^2-2m=0 $
\item $(a_2+b_2)a_0-4m^3-4m^2=0$
\item $a_0^2-m^4-2m^2-m^2-m-\gamma=0.$
\end{enumerate}
\vspace{5 mm}

We use Groebner bases to find the solutions to these systems of non-linear equations. We dispense with Case II first, noting that it consists of five equations in five variables so we expect it will have only finitely many solutions in $\overline{K}$. We assign an ordering to the variables in which $\gamma$ is last, and using MAGMA \cite{magma} to compute a Groebner basis for each system, we find that the system in Case II has one $K$-rational solution for each $m \in K$ with  
\begin{eqnarray}
0 & = & m^{14} + m^{13}\gamma + \frac{13}{3}m^{13} + \frac{13}{3}m^{12}\gamma + \frac{22}{3}m^{12} + \frac{22}{3}m^{11}\gamma + \frac{57}{8}m^{11} + \frac{33}{4}m^{10}\gamma \nonumber\\
& & + 5m^{10} + \frac{9}{8}m^9\gamma^2 + \frac{23}{3}m^9\gamma + \frac{9}{4}m^9 + \frac{8}{3}m^8\gamma^2 + \frac{25}{6}m^8\gamma + \frac{7}{12}m^8 + 
        \frac{23}{12}m^7\gamma^2 \nonumber\\
& & + \frac{17}{12}m^7\gamma - \frac{1}{24}m^7 + \frac{13}{12}m^6\gamma^2 - \frac{1}{12}m^6\gamma- \frac{1}{12}m^6 + \frac{1}{4}m^5\gamma^3 - \frac{1}{24}m^5\gamma^2 \nonumber\\
& & - \frac{1}{4}m^5\gamma - \frac{1}{24}m^5 - \frac{1}{4}m^4\gamma^2 - \frac{1}{6}m^4\gamma - \frac{1}{12}m^3\gamma^3 - 
        \frac{1}{4}m^3\gamma^2 - \frac{1}{6}m^2\gamma^3 - \frac{1}{24}m\gamma^4.\nonumber
\end{eqnarray}
Clearly for any $\gamma \in K$, there are at most 14 such $m$, and so case II does not affect the finiteness of the number of $m$ for which $g_{\gamma,m}(x)$ has a newly irreducible third iterate.

Case I proves more interesting. We compute that that for fixed $\gamma \in K$, Case I has precisely one solution $(a_0, a_1, a_2, a_3, m) \in K^5$ for each $K$-rational point $(a_3, m)$ on the curve
\begin{eqnarray}
C_\gamma:0& = & a_3^{16} + 32a_3^{14}m + 352a_3^{12}m^2 - 32a_3^{12}m + 1792a_3^{10}m^3 - 256a_3^{10}m^2 \nonumber\\
& & + 4352a_3^8m^4 - 1536a_3^8m^3 - 1792a_3^8m^2 - 2176a_3^8m - 2176a_3^8\gamma \nonumber\\
& & + 4096a_3^6m^5 - 
        8192a_3^6m^4 - 12288a_3^6m^3 - 10240a_3^6m^2 - 10240a_3^6m\gamma \nonumber\\
& & - 16384a_3^4m^5 - 32768a_3^4m^4 - 38912a_3^4m^3 - 22528a_3^4m^2\gamma - 14336a_3^4m^2 \nonumber\\
& &  - 14336a_3^4m\gamma - 
        16384a_3^2m^4 - 16384a_3^2m^3\gamma - 16384a_3^2m^3 - 16384a_3^2m^2\gamma \nonumber\\
& & + 4096m^2 + 8192m\gamma + 4096\gamma^2. \nonumber
\end{eqnarray}
For instance, when $\gamma = 1/2$, one checks that $C_\gamma$ has the rational point $(1, 7/4)$, which corresponds to the newly reducible example given in \eqref{n3example}.  The actual Groebner basis is far too long to include here; however, we have included the Groebner basis in the case $\gamma = 1$ in the appendix to this article. Thus when $C_\gamma$ has genus at least two, there can be only finitely many $K$-rational solutions to the system given in Case I, and hence only finitely many $m \in K$ such that $g_{\gamma,m}(x)$ has a newly irreducible third iterate. Part (2) of Theorem \ref{smores bites} is thus proved when the genus $C_\gamma$ is at least two (Case II results in at most finitely many additional $m$-values, as will be shown below). 

Using MAGMA again, we checked that $C_\gamma$ has genus 11 for $\gamma = r/4, -200 \leq r \leq 200$ except for the following:
$$g(C_{-2}) = 9, \qquad g(C_0) = 9, \qquad g(C_{1/4}) = 7, \qquad g(C_1) = 10.$$
Note that we chose $\gamma$ to have denominator $4$ in order to include the case $\gamma = 1/4$, where we strongly suspected degeneracies to occur. 
The map $\psi$ sending $C_\gamma$ to $\gamma$ has fibers whose genus appears generally to be 11. Even the degenerate fibers seem to have genus greater than 1, and hence part (2) of Theorem \ref{smores bites} holds even in those cases. Interestingly, if we take a section of $\psi$ by fixing a value of $m$ and letting $\gamma$ vary, we appear always to get a curve of genus at most 1. This phenomenon was first noticed by Michael Zieve (personal correspondence). In other words, writing $C_{\gamma,m}$ instead of $C_\gamma$, and choosing $\psi'$ to be the map sending $C_{\gamma,m}$ to $m$, the surface $C_{\gamma,m}$ is (birational to) an elliptic surface. This observation may pave the way for a full understanding of $C_{\gamma, m}$, and hence improvements to part (2) of Theorem \ref{smores bites}.



\section*{Acknowledgements}
The authors are grateful to Michael Zieve for the suggestion of the terminology ``newly reducible," and for providing useful comments and computations. 

\bibliographystyle{plain}

\section*{Appendix}
\appendix
The resulting Groebner basis for Case I with $\gamma=1$ as calculated by MAGMA \cite{magma} is:
\begin{enumerate}
\item ${a_0} - {a_1}{a_3} + \frac{1}{8}{a_3}^4 - {a_3}^2q - q^2 + q$
\item ${a_1}^2 - {a_1}{a_3}^3 + 4{a_1}{a_3}q + \frac{1}{8}{a_3}^6 - \frac{3}{2}{a_3}^4q + 3{a_3}^2q^2 + {a_3}^2q$
\item ${a_1}{a_3}^5 + \frac{1920}{571}{a_1}{a_3}q^6 - \frac{35582}{1713}{a_1}{a_3}q^5 + \frac{641146}{15417}{a_1}{a_3}q^4 -
\frac{173966}{5139}{a_1}{a_3}q^3 + \frac{254212}{15417}{a_1}{a_3}q^2 - \frac{4322}{571}{a_1}{a_3}q +
\frac{35}{30834}{a_3}^{14}q - \frac{1}{1152}{a_3}^14 - \frac{4265}{123336}{a_3}^{12}q^2 +
\frac{200467}{7893504}{a_3}^{12}q + \frac{4199}{2631168}{a_3}^{12} +\frac{1775}{5139}{a_3}^{10}q^3 -
\frac{191455}{986688}{a_3}^{10}q^2 - \frac{75881}{986688}{a_3}^{10}q - \frac{22705}{15417}{a_3}^8q^4 +
\frac{516139}{986688}{a_3}^8q^3 + \frac{315853}{493344}{a_3}^8q^2 + \frac{54587}{986688}{a_3}^8q -
\frac{7}{48}{a_3}^8 + \frac{36880}{15417}{a_3}^6q^5 + \frac{76901}{61668}{a_3}^6q^4 -
\frac{148475}{30834}{a_3}^6q^3 + \frac{219505}{61668}{a_3}^6q^2 - \frac{11}{18}{a_3}^6q -
\frac{240}{571}{a_3}^4q^6 - \frac{429961}{61668}{a_3}^4q^5 + \frac{677423}{61668}{a_3}^4q^4 -
\frac{402371}{61668}{a_3}^4q^3 - \frac{75667}{123336}{a_3}^4q^2 + \frac{131047}{41112}{a_3}^4q +
\frac{1920}{571}{a_3}^2q^7 - \frac{35582}{1713}{a_3}^2q^6 + \frac{641146}{15417}{a_3}^2q^5 -
\frac{374378}{15417}{a_3}^2q^4 + \frac{152233}{15417}{a_3}^2q^3 - \frac{189763}{15417}{a_3}^2q^2 +
\frac{960}{571}q^5 - \frac{14911}{1713}q^4 + \frac{186374}{15417}q^3 - \frac{104975}{15417}q^2 +
\frac{4}{3}q$
\item ${a_1}{a_3}^2q + \frac{720}{571}{a_1}q^6 - \frac{17791}{2284}{a_1}q^5 + \frac{320573}{20556}{a_1}q^4 -
\frac{86983}{6852}{a_1}q^3 + \frac{53275}{10278}{a_1}q^2 - \frac{4199}{2284}{a_1}q -
\frac{45}{292352}{a_3}^{15}q^3 + \frac{14911}{18710528}{a_3}^{15}q^2 - \frac{93187}{84197376}{a_3}^{15}q +
\frac{104975}{168394752}{a_3}^{15} + \frac{45}{9136}{a_3}^{13}q^4 - \frac{14911}{584704}{a_3}^{13}q^3 +
\frac{93187}{2631168}{a_3}^{13}q^2 - \frac{11415}{584704}{a_3}^{13}q - \frac{1}{3072}{a_3}^{13} -
\frac{495}{9136}{a_3}^{11}q^5 + \frac{161141}{584704}{a_3}^{11}q^4 - \frac{1915915}{5262336}{a_3}^{11}q^3 +
\frac{300037}{1754112}{a_3}^{11}q^2 + \frac{206789}{7016448}{a_3}^{11}q + \frac{4199}{7016448}{a_3}^{11} +
\frac{315}{1142}{a_3}^9q^6 - \frac{101497}{73088}{a_3}^9q^5 + \frac{1170419}{657792}{a_3}^9q^4 -
\frac{154417}{219264}{a_3}^9q^3 - \frac{203785}{877056}{a_3}^9q^2 - \frac{75881}{2631168}{a_3}^9q -
\frac{765}{1142}{a_3}^7q^7 + \frac{236207}{73088}{a_3}^7q^6 - \frac{545431}{164448}{a_3}^7q^5 -
\frac{142777}{109632}{a_3}^7q^4 + \frac{4272259}{877056}{a_3}^7q^3 - \frac{4322155}{1315584}{a_3}^7q^2
+ \frac{3623737}{2631168}{a_3}^7q + \frac{360}{571}{a_3}^5q^8 - \frac{9151}{4568}{a_3}^5q^7 -
\frac{19973}{5139}{a_3}^5q^6 + \frac{128675}{6852}{a_3}^5q^5 - \frac{4341377}{164448}{a_3}^5q^4 +
\frac{1413245}{82224}{a_3}^5q^3 - \frac{830245}{164448}{a_3}^5q^2 - \frac{41}{48}{a_3}^5q -
\frac{1440}{571}{a_3}^3q^8 + \frac{20671}{1142}{a_3}^3q^7 - \frac{258976}{5139}{a_3}^3q^6 +
\frac{12676049}{164448}{a_3}^3q^5 - \frac{3880925}{54816}{a_3}^3q^4 + \frac{688435}{18272}{a_3}^3q^3 -
\frac{881653}{109632}{a_3}^3q^2 + \frac{172159}{109632}{a_3}^3q + \frac{2160}{571}{a_3}q^7 -
\frac{53373}{2284}{a_3}q^6 + \frac{320573}{6852}{a_3}q^5 - \frac{85543}{2284}{a_3}q^4 +
\frac{177007}{13704}{a_3}q^3 - \frac{148651}{41112}{a_3}q^2$
\item ${a_1}q^7 - \frac{68}{9}{a_1}q^6 + \frac{1606}{81}{a_1}q^5 - \frac{578}{27}{a_1}q^4 + \frac{853}{81}{a_1}q^3 -
\frac{50}{9}{a_1}q^2 + {a_1}q - \frac{1}{8192}{a_3}^{15}q^4 + \frac{59}{73728}{a_3}^{15}q^3 -
\frac{1075}{663552}{a_3}^{15}q^2 + \frac{377}{331776}{a_3}^{15}q - \frac{25}{73728}{a_3}^{15} +
\frac{1}{256}{a_3}^{13}q^5 - \frac{59}{2304}{a_3}^{13}q^4 + \frac{1075}{20736}{a_3}^{13}q^3 -
\frac{83}{2304}{a_3}^{13}q^2 + \frac{35}{3456}{a_3}^{13}q - \frac{11}{256}{a_3}^{11}q^6 + \frac{5}{18}{a_3}^{11}q^5 -
\frac{5647}{10368}{a_3}^{11}q^4 + \frac{9341}{27648}{a_3}^{11}q^3 - \frac{847}{13824}{a_3}^{11}q^2 -
\frac{275}{27648}{a_3}^{11}q - \frac{1}{3072}{a_3}^{11} + \frac{7}{32}{a_3}^9q^7 - \frac{101}{72}{a_3}^9q^6 +
\frac{3497}{1296}{a_3}^9q^5 - \frac{5249}{3456}{a_3}^9q^4 + \frac{203}{1728}{a_3}^9q^3 +
\frac{365}{10368}{a_3}^9q^2 + \frac{11}{1152}{a_3}^9q - \frac{17}{32}{a_3}^7q^8 + \frac{949}{288}{a_3}^7q^7 -
\frac{7261}{1296}{a_3}^7q^6 + \frac{1103}{3456}{a_3}^7q^5 + \frac{19607}{3456}{a_3}^7q^4 -
\frac{58525}{10368}{a_3}^7q^3 + \frac{15673}{5184}{a_3}^7q^2 - \frac{863}{1152}{a_3}^7q + \frac{1}{2}{a_3}^5q^9
- \frac{41}{18}{a_3}^5q^8 - \frac{115}{81}{a_3}^5q^7 + \frac{4409}{216}{a_3}^5q^6 - \frac{23737}{648}{a_3}^5q^5
+ \frac{19853}{648}{a_3}^5q^4 - \frac{2225}{162}{a_3}^5q^3 + \frac{427}{216}{a_3}^5q^2 - 2{a_3}^3q^9 +
\frac{154}{9}{a_3}^3q^8 - \frac{37351}{648}{a_3}^3q^7 + \frac{8318}{81}{a_3}^3q^6 - \frac{11993}{108}{a_3}^3q^5
+ \frac{3571}{48}{a_3}^3q^4 - \frac{776}{27}{a_3}^3q^3 + \frac{3539}{432}{a_3}^3q^2 - \frac{41}{48}{a_3}^3q +
3{a_3}q^8 - \frac{68}{3}{a_3}q^7 + \frac{1606}{27}{a_3}q^6 - \frac{1147}{18}{a_3}q^5 + \frac{778}{27}{a_3}q^4 -
\frac{2075}{162}{a_3}q^3 + \frac{49}{18}{a_3}q^2$
\item${a_2} - \frac{1}{2}{a_3}^2 + 2q$
\item${a_3}^{16} - 32{a_3}^{14}q + 352{a_3}^{12}q^2 + 32{a_3}^{12}q - 1792{a_3}^{10}q^3 - 256{a_3}^{10}q^2 + 4352{a_3}^8q^4 + 1536{a_3}^8q^3 - 1792{a_3}^8q^2 + 2176{a_3}^8q - 4096{a_3}^6q^5 - 8192{a_3}^6q^4 + 12288{a_3}^6q^3 - 
10240{a_3}^6q^2 + 16384{a_3}^4q^5 - 32768{a_3}^4q^4 + 38912{a_3}^4q^3 - 14336{a_3}^4q^2 - 16384{a_3}^2q^4 + 
16384{a_3}^2q^3 + 4096q^2$
\end{enumerate}

\end{document}